\providecommand{\U}[1]{\protect\rule{.1in}{.1in}}
\newtheorem{theorem}{Theorem}
\newtheorem{definition}[theorem]{Definition}
\newtheorem{proposition}[theorem]{Proposition}
\newtheorem{remark}[theorem]{Remark}
\newenvironment{proof}[1][Proof]{\noindent\textbf{#1.} }{\ \rule{0.5em}{0.5em}}
\begin{document}

\title{Bifurcation of periodic solutions from a ring configuration in the vortex and
filament problems}
\author{C. Garc\'{\i}a-Azpeitia
\and J. Ize\\{\small Depto. Matem\'{a}ticas y Mec\'{a}nica, IIMAS-UNAM, FENOMEC, }\\{\small Apdo. Postal 20-726, 01000 M\'{e}xico D.F. }\\{\small cgazpe@hotmail.com}}
\maketitle

\begin{abstract}

This paper gives an analysis of the movement of $n+1$ almost parallel
filaments or vortices. Starting from a polygonal equilibrium of $n$ vortices
with equal circulation and one vortex at the center of the polygon, we find
bifurcation of periodic solutions. The bifurcation result makes use of the
orthogonal degree in order to prove global bifurcation of periodic solutions
depending on the circulation of the central vortex. In the case of the
filament problem these solutions are periodic traveling waves.

Keywords: ring configuration, $(n+1)$-vortex, $(n+1)$-filament, global
bifurcation, equivariant degree theory.

MSC 34C25, 37G40, 47H11, 54F45

\end{abstract}

\section{Introduction}

Consider $n$ point vortices or $n$ almost parallel filaments turning at a
constant speed in a plane around some central point. A relative equilibrium of
this configuration is a stationary solution of the equations in the rotating
coordinates. In this paper, we give a complete study of the polygonal relative
equilibrium where there are $n$ identical point vortices arranged on a regular
polygon and a central vortex, with a possibly different circulation. For this
polygonal equilibrium, we give a full analysis for the bifurcation of periodic
solutions. For the filament problem, we prove that there is a global
bifurcation of periodic solutions of waves traveling in the vertical
direction. In this problem, a study of periodic solutions in the vertical
direction would lead to a small divisors setting.

There has been a renewed interest in point vortex problems in the last 30
years, as a model for fluid mechanics. We refer to \cite{Ne01}, for an
up-to-date study, or \cite{MeHa91} for a more general reference. There are
many papers on relative equilibria and some on application of KAM theory but
few on periodic solutions. In the case of nearly parallel filaments, we shall
use the model proposed by Klein, Majda and Damodaran, see \cite{KMD95} and
\cite{KPV03}.

The linearization of the system at a critical point is a ${n}\times{n}$
matrix, which is non invertible, due to the rotational symmetry. These facts
imply that the study of the spectrum of the linearization is not an easy task
and that the classical bifurcation results for periodic solutions may not be
applied directly. However, we shall use the change of variables proved in our
previous paper, \cite{GaIz11}, in order to give not only this spectrum but
also the consequences for the symmetries of the solutions.

The present paper is a continuation of \cite{GaIz11}, where we had a complete
study of the bifurcation of relative equilibria. See also \cite{MeSc88}. Thus,
we shall use the results in that paper, but we shall recall all the important
notions. In a parallel paper, \cite{GaIz112}, we study a similar problem for
point masses. Although there are many similarities, in particular in the
change of variables, the results are of a quite different nature.

The next section is devoted to the mathematical setting of the problem, with
the symmetries involved. Then, we give, in the following two sections, the
preliminary results needed in order to apply the orthogonal degree theory
developed in \cite{IzVi03}, that is the global Liapunov-Schmidt reduction, the
study of the irreducible representations, with the change of variables of
\cite{GaIz11}, and the symmetries associated to these representations. In the
next section, we prove our bifurcation results and, in the following section,
we give the analysis of the spectrum, with the complete results on the type of
solutions which bifurcate from the relative equilibrium, in the vortex and the
filament cases.

\section{Setting the problem}

Let us denote by $q_{j}(t,s)\in\mathbb{R}^{2}$ the position of the $j$'th
filament, where $s$ represents the vertical axis. Let us suppose that one
filament has circulation $\kappa_{0}=\mu$ and $n$ filaments have circulation
$\kappa_{j}=1$, for $j\in\{1,...,n\}$. The dimensionless equations of $n+1$
almost parallel filaments in rotating coordinates, $u_{j}(t)=e^{-\omega
Jt}q_{j}(t)$, are given by
\[
\kappa_{j}J\partial_{t}u_{j}+\kappa_{j}^{2}\partial_{ss}^{2}u_{j}=\omega
\kappa_{j}u_{j}-\sum_{i=0(i\neq j)}^{n}\kappa_{i}\kappa_{j}\frac{u_{j}-u_{i}%
}{\left\Vert u_{j}-u_{i}\right\Vert ^{2}}%
\]
where $J$ is the canonical symplectic matrix.

Define the vector $u=(u_{0},u_{1},...,u_{n})^{T}$, the matrix of circulations
$\mathcal{K}=diag(\mu I,I,...,I)$ and the symplectic matrix $\mathcal{J}%
=diag(J,J,...,J)$. Then, the equations of the filaments, in vectorial form,
can be written as%
\begin{align*}
\mathcal{KJ}u_{t}+\mathcal{K}^{2}u_{ss}  &  =\nabla V(u)\text{ with }\\
V(u)  &  =\omega\frac{1}{2}(u^{T}\mathcal{K}u)-\sum_{i<j}\kappa_{i}\kappa
_{j}\ln(\left\Vert u_{j}-u_{i}\right\Vert )\text{.}%
\end{align*}

The previous equations contain, as a particular case when the filaments
$u(t,s)$ are constant on the vertical axis $s$, the vortex problem given by
the equations%
\begin{equation}
\mathcal{KJ}\dot{u}=\nabla V(u).
\end{equation}
In this paper we analyze two problems: The bifurcation of periodic solutions
for the vortex problem, and bifurcation of traveling waves for the filament
problem. In order to find traveling waves, one sets $u(t,s)=u(\gamma t+s)$,
then the traveling waves become solutions of the ordinary differential
equation%
\begin{equation}
\mathcal{K}^{2}\ddot{u}+\gamma\mathcal{KJ}\dot{u}=\nabla V(u).
\end{equation}

Now, the critical points of the potential $V$ correspond to relative
equilibria of the problem. Actually, the polygonal configuration
\[
\bar{a}=(0,e^{i\zeta},...,e^{in\zeta})
\]
is a relative equilibrium when $\omega=s_{1}+\mu$, with $s_{1}=(n-1)/2$. This
fact is proven in \cite{GaIz11}, where the bifurcation of relative equilibria
is analyzed using $\mu$ as a parameter.

\begin{remark}
If, as we have done in \cite{GaIz112}, one replaces, in the change of
coordinates, the term $e^{\omega tJ}$ with a complex factor $\varphi(t)$
(taking $q_{j}$ and $u_{j}$ as complex functions instead of a planar vector),
where $\varphi$ satisfies the equation%
\[
i{\varphi}^{\prime}=-\omega\varphi/{\arrowvert\varphi\arrowvert}^{2}\text{,}%
\]
then the equations, for the vortex problem, become%

\[
{\arrowvert\varphi\arrowvert}^{2}\mathcal{K}i\dot{u}=\nabla V(u)\text{.}%
\]
In particular, the stationary solutions of this system are the same solutions
studied in \cite{GaIz11}. However, the solutions of the equation for $\varphi$
are $ce^{i\nu t}$, with $\nu=\omega/\arrowvert c^{2}\arrowvert$, that is
circular orbits only. Thus, the vortex problem differs from the masses
problem, where a similar argument gives all possible conical orbits.
\end{remark}

In this paper we shall prove bifurcation of periodic solutions from the
polygonal equilibrium $\bar{a}$. This is an analogous treatment to the
bifurcation of periodic solutions for the $(n+1)$-body problem in
\cite{GaIz112}.

Changing variables by $x(t)=u(t/\nu)$, the $2\pi/\nu$-periodic solutions of
the differential equation for the filament become zeros of the bifurcation
operator%
\begin{align*}
f  &  :H_{2\pi}^{2}(\mathbb{R}^{2(n+1)}\backslash\Psi)\rightarrow L_{2\pi}%
^{2}\\
f(x,\nu)  &  =-\nu^{2}\mathcal{K}^{2}\ddot{x}-\gamma\nu\mathcal{KJ}\dot
{x}+\nabla V(x)\text{,}%
\end{align*}
where the set $\Psi=\{x\in\mathbb{R}^{2(n+1)}:x_{i}=x_{j}\}$ consists of the
collision points, and the set
\[
H_{2\pi}^{2}(\mathbb{R}^{2(n+1)}\backslash\Psi)=\{x\in H_{2\pi}^{2}%
(\mathbb{R}^{2(n+1)}):x_{i}(t)\neq x_{j}(t)\}
\]
consists of the collision-free orbits.

\begin{remark}
We shall concentrate the analysis on the filament problem. All the following
statements apply to the vortex problem, except that one has the bifurcation
operator $f(x)\mathcal{=-}\nu\mathcal{KJ}\dot{x}+\nabla V(x)$ defined in the
space $H_{2\pi}^{1}(\mathbb{R}^{2(n+1)}\backslash\Psi)$.
\end{remark}

\begin{definition}
Let $S_{n}$ be the group of permutations of $\{1,...,n\}$. One defines the
action of $S_{n}$ in $\mathbb{R}^{2(n+1)}$ as%
\[
\rho(\gamma)(x_{0},x_{1},...,x_{n})=(x_{0},x_{\gamma(1)},...,x_{\gamma(n)}).
\]

\end{definition}

The gradient $\nabla V$ is $S_{n}$-equivariant, that is it commutes with the
action of the group, because $n$ vortices have the same circulation. Let
$\mathbb{Z}_{n}$ be the subgroup of permutations generated by $\zeta(j)=j+1$
modulus $n$, then the map $f$ is $\Gamma\times S^{1}$-equivariant with the
abelian group%
\[
\Gamma=\mathbb{Z}_{n}\times S^{1}.
\]

Now, the infinitesimal generators of $S^{1}$ and $\Gamma$ are%
\[
\text{ }Ax=\frac{\partial}{\partial\varphi}|_{\varphi=0}x(t+\varphi)=\dot
{x}\text{ and }A_{1}x=\frac{\partial}{\partial\theta}|_{\theta=0}%
e^{-\mathcal{J\theta}}x=-\mathcal{J}x\text{.}%
\]
Since $V$ is $\Gamma$-invariant, then the gradient $\nabla V(x)$ must be
orthogonal to the generator $A_{1}x$. As a consequence, the map $f$ must be
$\Gamma\times S^{1}$-orthogonal, due to the equalities%
\begin{align*}
\left\langle f(x),\dot{x}\right\rangle _{L_{2\pi}^{2}}  &  =-\frac{\nu^{2}}%
{2}\left\Vert \mathcal{K}\dot{x}\right\Vert ^{2}|_{0}^{2\pi}-\gamma\nu
\sum\kappa_{j}\left\langle \mathcal{J}\dot{x}_{j},\dot{x}_{j}\right\rangle
_{L_{2\pi}^{2}}+V(x)|_{0}^{2\pi}=0\text{,}\\
\left\langle f(x),\mathcal{J}x\right\rangle _{L_{2\pi}^{2}}  &  =\nu
^{2}\left\langle \mathcal{K}\dot{x},\mathcal{JK}\dot{x}\right\rangle
_{L_{2\pi}^{2}}-\frac{\gamma\nu}{2}\sum\kappa_{j}\left\Vert x_{j}\right\Vert
^{2}|_{0}^{2\pi}+\int_{0}^{2\pi}\left\langle \nabla V,\mathcal{J}%
x\right\rangle =0\text{.}%
\end{align*}

Define $\mathbb{\tilde{Z}}_{n}$ as the subgroup of $\Gamma$ generated by
$(\zeta,\zeta)\in\mathbb{Z}_{n}\times S^{1}$ with $\zeta=2\pi/n\in S^{1}$.
Since the action of $(\zeta,\zeta)$ leaves fixed the equilibrium $\bar{a}$,
then the isotropy group of $\bar{a}$ is the group ${\Gamma}_{\bar{a}}\times
S^{1}$ with%
\[
\Gamma_{\bar{a}}=\mathbb{\tilde{Z}}_{n}.
\]
Thus, the orbit of $\bar{a}$ is isomorphic to the group $S^{1}$. In fact, the
orbit consists of the rotations of the equilibrium. As a consequence, the
generator of the orbit $A_{1}\bar{a}=-\mathcal{J}\bar{a}$ must be in the
kernel of $D^{2}f(\bar{a})$.

\section{The Liapunov-Schmidt reduction}

In order to apply the orthogonal degree of \cite{IzVi03}, one needs to make a
reduction of the bifurcation map to some finite space.

The bifurcation map $f$ has Fourier series%
\[
f(x)=\sum_{l\in\mathbb{Z}}(l^{2}\nu^{2}\mathcal{K}^{2}x_{l}-\gamma
l\nu(i\mathcal{J)K}x_{l}+g_{l})e^{ilt}\text{,}%
\]
where $x_{l}$ and $g_{l}$ are the Fourier modes of $x$ and $\nabla V(x)$.
Since $l^{2}\nu^{2}\mathcal{K}^{2}-\gamma il\nu(i\mathcal{J)K}$ is invertible
for all big $l$'s, then one may solve $x_{l}$ for $\left\vert l\right\vert >p$
from
\[
l^{2}\nu^{2}\mathcal{K}^{2}x_{l}-\gamma l\nu(i\mathcal{J)K}x_{l}%
+g_{l}=0\text{.}%
\]
Actually, one may use the global implicit function theorem to perform the
reduction globally.

In this way, the bifurcation operator $f$ has the same zeros as the
bifurcation function
\[
f(x_{1},x_{2}(x_{1},\nu),\nu)=\sum_{\left\vert l\right\vert \leq p}(l^{2}%
\nu^{2}\mathcal{K}^{2}x_{l}-\gamma l\nu(i\mathcal{J)K}x_{l}+g_{l}%
)e^{ilt}\text{,}%
\]
and the linearization of the bifurcation function at some equilibrium $\bar
{a}$ is%
\[
f^{\prime}(\bar{a})x_{1}=\sum_{\left\vert l\right\vert \leq p}\left(  l^{2}%
\nu^{2}\mathcal{K}^{2}-\gamma l\nu(i\mathcal{J)K}+D^{2}V(\bar{a})\right)
x_{l}e^{ilt}\text{.}%
\]

Since the bifurcation operator is real, the linearization of the bifurcation
function is determined by blocks $M(l\nu)$ for $l\in\{0,...,p\}$, where
$M(\nu)$ is the matrix%

\[
M(\nu)=\nu^{2}\mathcal{K}^{2}-\gamma\nu(i\mathcal{J)K}+D^{2}V_{\alpha}(\bar
{a})\text{.}%
\]
These blocks $M(l\nu)$ represent the Fourier modes of the linearized equation
at the equilibrium.

\begin{remark}
For the vortex problem one may prove a similar statement and get the Fourier
modes%
\[
M(\nu)=-\nu(i\mathcal{J)K}+D^{2}V(\bar{a}).
\]

\end{remark}

\begin{remark}
Actually, the orthogonal degree could be used to analyze the bifurcation of
periodic solutions, in time and spatial $z$-coordinate, for the filaments.
However, the Liapunov-Schmidt reduction cannot be performed without solving a
small divisor problem.
\end{remark}

\section{Irreducible representations}

In order to apply the orthogonal degree, one needs to find the irreducible
representation subspaces for the action of $\Gamma_{\bar{a}}=\mathbb{\tilde
{Z}}_{n}$.

In the following sections we shall assume that $n>2$, since the case $n=2$ is
different and will be treated at the end of the paper.

\begin{definition}
For $k\in\{2,...,n-2,n\}$, we define the isomorphisms $T_{k}:\mathbb{C}%
^{2}\rightarrow W_{k}$ as%
\begin{align*}
T_{k}(w)  &  =(0,n^{-1/2}e^{(ikI+J)\zeta}w,...,n^{-1/2}e^{n(ikI+J)\zeta
}w)\text{ with}\\
W_{k}  &  =\{(0,e^{(ikI+J)\zeta}w,...,e^{n(ikI+J)\zeta}w):w\in\mathbb{C}%
^{2}\}\text{.}%
\end{align*}
For $k\in\{1,n-1\}$, we define the isomorphism $T_{k}:$ $\mathbb{C}%
^{3}\rightarrow W_{k}$ as
\begin{align*}
T_{k}(\alpha,w)  &  =(v_{k}\alpha,n^{-1/2}e^{(ikI+J)\zeta}w,...,n^{-1/2}%
e^{n(ikI+J)\zeta}w)\text{ with}\\
W_{k}  &  =\{(v_{k}\alpha,e^{(ikI+J)\zeta}w,...,e^{n(ikI+J)\zeta}w):\alpha
\in\mathbb{C},w\in\mathbb{C}^{2}\}\text{,}%
\end{align*}
where $v_{1}$ and $v_{n-1}$ are the vectors%
\[
v_{1}=2^{-1/2}\left(  1,i\right)  \text{ and }v_{n-1}=2^{-1/2}\left(
1,-i\right)  .
\]

\end{definition}

In the paper \cite{GaIz11}, we have proven that the subspaces $W_{k}$ are
irreducible representations. Also, we showed that the action of $(\zeta
,\zeta)\in\mathbb{\tilde{Z}}_{n}$ on the space $W_{k}$ is given by
\[
\rho(\zeta,\zeta)=e^{ik\zeta}\text{.}%
\]

Since the subspaces $W_{k}$ are orthogonal, then the linear map%
\[
Pw=\sum_{j=1}^{n}T(w_{k})
\]
is orthogonal, where $w=(w_{1},...,w_{n})$, with $w_{k}\in\mathbb{C}^{3}$ for
$k=1,n-1$ and $w_{k}\in\mathbb{C}^{2}$ for the other $k$'s.

Since the map $P$ rearranges the coordinates of the irreducible
representations, one has, from Schur's lemma, that%
\[
P^{-1}D^{2}V(\bar{a})P=diag(B_{1},...,B_{n})\text{,}%
\]
where $B_{k}$ are matrices which satisfy $D^{2}V(\bar{a})T_{k}(w)=T_{k}%
(B_{k}w)$. In the paper \cite{GaIz11}, we have found the blocks $B_{k}$: they
satisfy $B_{n-k}=\bar{B}_{k}$ and
\begin{align*}
B_{k}  &  =diag\left(  2(\mu+s_{1})-s_{k},s_{k}\right)  \text{ for }%
k\in\{2,...,n-2,n\}\text{, and}\\
B_{1}  &  =\left(
\begin{array}
[c]{ccc}%
\mu\left(  s_{1}+\mu\right)  & -\left(  n/2\right)  ^{1/2}\mu & -\left(
n/2\right)  ^{1/2}\mu i\\
-\left(  n/2\right)  ^{1/2}\mu & s_{1}+2\mu & 0\\
\left(  n/2\right)  ^{1/2}\mu i & 0 & s_{1}%
\end{array}
\right)  \text{,}%
\end{align*}
where%
\[
s_{k}=k(n-k)/2.
\]

For the linearization of the equation one has that%
\[
P^{-1}M(\nu)P=diag(m_{1}(\nu),..., m_{n}(\nu))\text{.}
\]

Thus, we find the matrices $m_{k}(\nu)$ in terms of the blocks $B_{k}$.

\begin{proposition}
The matrices $m_{k}(\nu)$ satisfy $m_{k}(\nu)=\bar{m}_{n-k}(-\nu)$ with%
\begin{align*}
m_{k}(\nu)  &  =\nu^{2}I-2\gamma\nu(iJ)+B_{k}\text{ for }k\in
\{2,...,n-2,n\}\text{, and}\\
m_{1}(\nu)  &  =\nu^{2}diag(\mu^{2},I)-2\gamma\nu diag(\mu,iJ)+B_{1}\text{.}%
\end{align*}

\end{proposition}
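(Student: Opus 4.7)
The plan is to exploit the fact that $P$ is assembled from the irreducible subspaces of the action of $\Gamma_{\bar{a}}=\mathbb{\tilde Z}_n$ and that all three summands of $M(\nu)$, namely $\mathcal{K}^{2}$, $(i\mathcal{J})\mathcal{K}$, and $D^{2}V(\bar a)$, commute with this action; by Schur's lemma each must be block-diagonalized by $P$. The third block, $B_k$, is already identified in \cite{GaIz11}, so the work reduces to pulling $\mathcal{K}^{2}$ and $(i\mathcal{J})\mathcal{K}$ back through the isomorphisms $T_k:\mathbb{C}^{2}\to W_k$ (respectively $T_k:\mathbb{C}^{3}\to W_k$ when $k\in\{1,n-1\}$) and reading off the resulting matrices.

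First I would treat the generic case $k\in\{2,\dots,n-2,n\}$. Any element $T_k(w)$ has a vanishing central coordinate, so $\mathcal{K}\,T_k(w)=T_k(w)$ and hence $\mathcal{K}^{2}T_k(w)=T_k(w)$. For the middle term I would verify
\[
(i\mathcal{J})\mathcal{K}\,T_k(w) \;=\; i\mathcal{J}\,T_k(w) \;=\; T_k(iJ\,w),
\]
using that $iJ$ commutes with each rotation $e^{(ikI+J)\zeta j}$ because $iI$ commutes with everything and $J$ commutes with itself. Combined with the known block $B_k$, this yields the stated form of $m_k(\nu)$ in this case.

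For the exceptional indices $k\in\{1,n-1\}$ the outer-coordinate computation is identical and still contributes an $iJ$ entry. On the central slot I would use that $v_1$ and $v_{n-1}$ are eigenvectors of $J$, namely $Jv_1=-iv_1$ and $Jv_{n-1}=iv_{n-1}$. Consequently $\mathcal{K}$ acts on the $\alpha$-component by multiplication by $\mu$ (since $v_k$ lives in the $0$-th block of $\mathcal{K}$), and $i\mathcal{J}$ acts on the same slot as the scalar $iJv_k/v_k=\pm1$. Assembling the central and outer slots produces $\mathrm{diag}(\mu^{2},I)$ and $\mathrm{diag}(\mu,iJ)$, matching the matrices displayed for $m_1(\nu)$. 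The relation $m_k(\nu)=\bar m_{n-k}(-\nu)$ then follows immediately from $B_{n-k}=\bar B_k$ together with $\overline{\nu^{2}I}=\nu^{2}I$ and $(-\nu)\overline{iJ}=\nu(iJ)$, so flipping the sign of $\nu$ inside $m_{n-k}$ and conjugating reproduces $m_k(\nu)$.

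The one step that requires scrupulous care, and therefore the likely main obstacle, is the commutation computation in the outer slot: the subspaces $W_k$ are complex, and one must keep strictly distinct the scalar multiplication by $i$ (the $iI$ appearing inside the exponent) from the real symplectic matrix $J$ extended $\mathbb{C}$-linearly. Once this distinction is nailed down, so that $iJ$ really is verified to commute with $e^{(ikI+J)\zeta}$, all remaining identities are direct linear algebra on $2\times 2$ and $3\times 3$ blocks.
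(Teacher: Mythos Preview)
Your proposal is correct and follows essentially the same route as the paper: compute how $\mathcal{K}$ and $i\mathcal{J}$ act on each $W_k$ through the isomorphisms $T_k$, use that the central coordinate vanishes for $k\in\{2,\dots,n-2,n\}$ and that $v_1,v_{n-1}$ are eigenvectors of $iJ$ with eigenvalues $\pm 1$ for the exceptional blocks, and then invoke $B_{n-k}=\bar B_k$ to get the conjugation relation. The only place where the paper is slightly more explicit is that it writes out $m_{n-1}(\nu)=\nu^{2}\mathrm{diag}(\mu^{2},I)-2\gamma\nu\,\mathrm{diag}(-\mu,iJ)+B_{n-1}$ (using $(iJ)v_{n-1}=-v_{n-1}$) before checking the relation $m_{n-k}(\nu)=\bar m_k(-\nu)$ in the $k=1$ case; your argument leaves this sign inside the $\pm1$ and the final conjugation step, which is fine but worth spelling out since the central-slot scalar is real and does not conjugate, so the sign match comes entirely from the $-\mu$ in $m_{n-1}$ against the $-\nu$ substitution.
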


\begin{proof}
For $k\in\{2,...,n-2,n\}$, the matrix $\mathcal{K}$\ in the space $W_{k}$ is
$\mathcal{K}T_{k}(z)=T_{k}(z)$, and the matrix $i\mathcal{J}$ satisfies
$\mathcal{J}T_{k}(z)=T_{k}(Jz)$ . The matrix $\mathcal{K}$ in $W_{1}$ is
$\mathcal{K}T_{1}(z)=T_{1}(diag(\mu,1,1)z)$, and since $(iJ)v_{1}=v_{1}$, then
the matrix $i\mathcal{J}$ satisfies $(i\mathcal{J})T_{1}(z)=T_{1}%
(diag(1,iJ)z)$. From these facts we conclude the statements.

Finally, using that $(iJ)v_{2}=-v_{2}$, one has
\[
m_{n-1}(\nu)=\nu^{2}diag(\mu^{2},I)-2\gamma\nu diag(-\mu,iJ)+B_{n-1}\text{.}%
\]
Then, from the equality $B_{n-k}=\bar{B}_{k}$, one obtains the equality
$m_{n-k}(\nu)=\bar{m}_{k}(-\nu)$.
\end{proof}

\begin{remark}
Analogously, for the vortex problem one has that $m_{k}(\nu)=\bar{m}%
_{n-k}(-\nu)$ with%
\begin{align*}
m_{k}(\nu)  &  =-\nu(iJ)+B_{k}\text{ for }k\in\{2,...,n-2,n\}\text{, and}\\
m_{1}(\nu)  &  =-\nu diag(\mu,iJ)+B_{1}\text{.}%
\end{align*}

\end{remark}

The action of $(\zeta,\zeta,\varphi)\in\mathbb{\tilde{Z}}_{n}\times S^{1}$ on
$W_{k}$ is $\rho(\zeta,\zeta,\varphi)=e^{ik\zeta}e^{il\varphi}$. Therefore,
the isotropy group of the space $W_{k}$ is
\[
\mathbb{Z}_{n}(k)=\left\langle \left(  \zeta,\zeta,-k\zeta\right)
\right\rangle .
\]

\section{Bifurcation theorem}

The fixed point subspace of the isotropy group $\Gamma_{\bar{a}}\times S^{1}$
corresponds to the block $m_{n}(0)=B_{n}$. Since the generator of the kernel
is $A_{1}\bar{a}=T_{n}(-n^{1/2}e_{2})$, then $e_{2}$ must be in the kernel of
$m_{n}(0)$.

Following \cite{IzVi03}, one defines $\sigma$ to be the sign of $m_{n}(0)$ in
the orthogonal subspace to $e_{2}$. Since $B_{n}=2diag\left(  \omega,0\right)
$, then%
\[
\sigma=sgn(e_{1}^{T}B_{n}e_{1})=sgn(\omega).
\]

We have proven, in \cite{GaIz11}, that $m_{k}(0)=B_{k}$ is invertible except
for a point $\mu_{k}$ for $k=1,..,n-1$: $\mu_{k}=s_{k}/2 -s_{1}$ for
$k=2,...,[n/2]$ and $\mu_{1}=s_{1}^{2}$. Therefore, the hypotheses of
\cite{IzVi03} apply for $\mu\neq$ $\mu_{k}$. In fact, in \cite{GaIz11}, we
proved a global bifurcation of stationary solutions from each $\mu_{k}$. See
also \cite{MeSc88}.

\begin{definition}
Following \cite{IzVi03}, we define%
\[
\eta_{k}(\nu_{0})=\sigma\{n_{k}(\nu_{0}-\rho))-n_{k}(\nu_{0}+\rho)\}\text{,}%
\]
where $n_{k}(\nu)$ is the Morse index of $m_{k}(\nu)$.
\end{definition}

This number corresponds to the jump of the orthogonal index at $\nu_{0}$.
Then, from the results of \cite{IzVi03}, we can state the following theorem
for $\mu\neq\mu_{1},...,\mu_{n-1}$.

The orthogonal degree is defined for orthogonal maps that are non-zero on the
boundary of some open bounded invariant set. The degree is made of integers,
one for each orbit type, and it has all the properties of the usual Brouwer
degree. Hence, if one of the integers is non-zero, then the map has a zero
corresponding to the orbit type of that integer. In addition, the degree is
invariant under orthogonal deformations that are non-zero on the boundary. The
degree has other properties such as sum, products and suspensions, for
instance, the degree of two pieces of the set is the sum of the degrees.

Now, if one has an isolated orbit, then its linearization at one point of the
orbit $x_{0}$ has a block diagonal structure, due to Schur's lemma, where the
isotropy subgroup of $x_{0}$ acts as $\mathbb{Z}_{n}$ or as $S^{1}$.
Therefore, the orthogonal index of the orbit is given by the signs of the
determinants of the submatrices where the action is as $\mathbb{Z}_{n}$, for
$n=1$ and $n=2$, and the Morse indices of the submatrices where the action is
as $S^{1}$. In particular, for problems with a parameter, if the orthogonal
index changes at some value of the parameter, one will have bifurcation of
solutions with the corresponding orbit type. Here, the parameter is the
frequency $\nu$.

\begin{theorem}
If $\eta_{k}(\nu_{k})$ is different from zero, then the polygonal equilibrium
has a global bifurcation of periodic solutions from $2\pi/\nu_{k}$ with
isotropy group $\mathbb{\tilde{Z}}_{n}(k)$.
\end{theorem}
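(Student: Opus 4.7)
The plan is to reduce to a direct application of the equivariant orthogonal degree of \cite{IzVi03}. The global Liapunov-Schmidt reduction carried out in the previous section already produces a finite-dimensional $\Gamma\times S^{1}$-orthogonal bifurcation function whose zero set is in bijection with the periodic solutions of the filament equation, and the block diagonalization $P^{-1}M(l\nu)P=\mathrm{diag}(m_{1}(l\nu),\dots ,m_{n}(l\nu))$ is precisely the decomposition of the linearization at $\bar{a}$ into the irreducible pieces $W_{k}$, each with isotropy $\mathbb{Z}_{n}(k)$. The strategy is thus to isolate $\bar{a}$ in a small $\Gamma\times S^{1}$-invariant neighborhood, to compute the orthogonal index at the two endpoints $\nu_{k}\pm\rho$, and to exhibit a jump in the $\mathbb{Z}_{n}(k)$-component.

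First I would choose $\rho>0$ small enough that the only block of $m_{j}(l\nu)$ that becomes singular on $[\nu_{k}-\rho,\nu_{k}+\rho]$ for $|l|\le p$ is the block $m_{k}$ at $l=1$, and I would restrict $f$ to a tubular slice transverse to the one-dimensional group orbit of $\bar{a}$ (generated by $A_{1}\bar{a}=-\mathcal{J}\bar{a}$). On this slice the linearization is invertible away from $\nu=\nu_{k}$, so the orthogonal degree of $f(\cdot,\nu)$ on the boundary is well defined at both endpoints.

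Next I would compute the change in orthogonal index across $\nu_{k}$. According to the construction in \cite{IzVi03}, the index attached to the orbit type $\mathbb{Z}_{n}(k)$ is obtained from the $\sigma$-weighted jump of the Morse index of the $W_{k}$-block, where $\sigma$ is the sign of the $(\Gamma\times S^{1})$-fixed block $m_{n}(0)=B_{n}$ restricted to the orthogonal complement of the generator $e_{2}$ of the isotropy orbit. Since $\sigma=\mathrm{sgn}(\omega)$ was already identified, the jump is exactly
\[
\eta_{k}(\nu_{k})=\sigma\{n_{k}(\nu_{k}-\rho)-n_{k}(\nu_{k}+\rho)\}.
\]
When $\eta_{k}(\nu_{k})\neq 0$, the $\mathbb{Z}_{n}(k)$-component of the orthogonal degree is nonzero on the annulus $\{\nu_{k}-\rho\le\nu\le\nu_{k}+\rho\}\times\partial B_{\varepsilon}$, so the existence and homotopy properties of the degree yield a continuum of zeros of $f$ with isotropy at least $\mathbb{Z}_{n}(k)$ emerging from $(\bar{a},\nu_{k})$.

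Finally, the global statement follows from the standard Rabinowitz-type alternative for the orthogonal degree: the continuum is either unbounded in $H_{2\pi}^{2}\times\mathbb{R}$, or it returns to the equilibrium manifold at some other value $\nu_{k}'$, or it approaches the collision set $\Psi$ which is excluded from the domain. The main obstacle I expect is the bookkeeping at the reduction step: one must verify that the orthogonality of $f$ survives the global Liapunov-Schmidt reduction, and that no spurious contribution to the index comes from higher Fourier modes. The first follows because the implicit function used to solve for $x_{l}$, $|l|>p$, is $\Gamma\times S^{1}$-equivariant, so the reduced map remains orthogonal to the generators $A\bar{a}$ and $A_{1}\bar{a}$; the second is controlled by the uniform invertibility of $l^{2}\nu^{2}\mathcal{K}^{2}-\gamma l\nu(i\mathcal{J})\mathcal{K}+D^{2}V(\bar{a})$ for $|l|>p$, which was what made the reduction possible in the first place.
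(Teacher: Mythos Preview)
Your proposal is correct and follows essentially the same approach as the paper: in fact the paper does not write out a proof at all, but simply invokes the orthogonal degree machinery of \cite{IzVi03} after having set up the Liapunov--Schmidt reduction, the block diagonalization $P^{-1}M(l\nu)P=\mathrm{diag}(m_{1}(l\nu),\dots,m_{n}(l\nu))$, and the identification $\sigma=\mathrm{sgn}(\omega)$. Your sketch is a faithful unpacking of exactly that citation---choosing a slice transverse to the group orbit, computing the jump of the $\mathbb{Z}_{n}(k)$-component of the index via $\eta_{k}(\nu_{k})$, and appealing to the global alternative---so there is no substantive difference in method; one minor refinement is that the paper's global alternative is phrased as ``the sum of the jumps over all bifurcation points is zero'' rather than merely ``returns at some other $\nu_{k}'$'', which is the sharper degree-theoretic statement.
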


The solutions with isotropy group $\mathbb{\tilde{Z}}_{n}(k)$ must satisfy the
symmetries%
\[
u_{j}(t)=e^{-i\zeta}u_{\zeta(j)}(t-k\zeta).
\]
For the $n$ elements with equal circulation,if we use the notation
$u_{j}=u_{j+kn}$ for $j\in\{1,...,n\}$, then $\zeta(j)=j+1$, and the $n$
elements satisfy%
\[
u_{j+1}(t)=e^{\ ij\zeta}u_{1}(t+jk\zeta).
\]
On the other hand, the central element remains at the origin if $k$ and $n$
have a common factor and, if they are relatively prime, then $u_{0}%
(t)=e^{ik^{-1}\zeta}u_{0}(t+\zeta)$, where $k^{-1}$ is such that $k^{-1}k=1$,
modulo $n$.

By global bifurcation, we mean that the branch goes to infinity in norm or
period, or goes to the collision set (in these three cases, we say that the
bifurcation is non admissible) or, if none of the above happens, then the sum
of the above jumps, over all the bifurcation points, is zero. See \cite{Iz95}
for this kind of arguments.

A complete description of these solutions may be found in the paper
\cite{GaIz112} and in \cite{Ga10}.

\section{Spectral analysis}

The $m_{k}(\nu)$ have real eigenvalues, so the matrices $m_{n-k}(\nu)$ and
$m_{k}(-\nu)$ have the same spectrum due to the equality $m_{n-k}(\nu)=\bar
{m}_{k}(-\nu)$. As a consequence, the Morse numbers satisfy
\[
n_{n-k}(\nu)=n_{k}(-\nu).
\]

In order to analyze easily the spectrum of these problems, one may use the
parameter $\mu$ or equivalently $\omega=\mu+s_{1}$.

\subsection{Vortex problem}

\subsubsection{Blocks $k\in\{2,...,n-2,n\}$}

In this case, the blocks are given by
\[
B_{k}=2diag\left(  \omega-\omega_{k},\omega_{k}\right)  \text{,}%
\]
with $\omega_{k}=s_{k}/2$.

\begin{proposition}
Define $\nu_{k}$ as%
\[
\nu_{k}=[4\omega_{k}(\omega-\omega_{k})]^{1/2}\text{.}%
\]
For $\omega>\omega_{k}$, the matrix $m_{k}(\nu)$ changes its Morse index at
the positive value $\nu_{k}$ with%
\[
\eta_{k}(\nu_{k})=-1.
\]
For $\omega<\omega_{k}$, the matrix $m_{k}(\nu)$ is always invertible.
\end{proposition}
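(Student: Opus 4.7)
The plan is to carry out a direct spectral analysis of the $2\times 2$ Hermitian matrix $m_k(\nu) = -\nu(iJ) + B_k$. Writing $J$ in the standard form, I would first produce the explicit matrix
\[
m_k(\nu) = \begin{pmatrix} 2(\omega-\omega_k) & i\nu \\ -i\nu & 2\omega_k \end{pmatrix},
\]
and then compute its trace and determinant:
\[
\operatorname{tr} m_k(\nu) = 2\omega, \qquad \det m_k(\nu) = 4\omega_k(\omega - \omega_k) - \nu^2.
\]
All spectral information follows from these two numbers, so the rest of the argument is essentially bookkeeping.

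Next I would split into cases according to the sign of $\omega-\omega_k$. If $\omega<\omega_k$, then $\det m_k(\nu)<0$ for every real $\nu$, so $m_k(\nu)$ always has one positive and one negative eigenvalue; in particular it is invertible and its Morse index is constantly $1$, yielding no change and no bifurcation. If instead $\omega>\omega_k$, the determinant vanishes exactly at $\nu = \pm\nu_k$ with $\nu_k = [4\omega_k(\omega-\omega_k)]^{1/2}>0$, as defined in the statement, and this identifies $\nu_k$ as the unique positive frequency at which $m_k(\nu)$ fails to be invertible.

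To evaluate $\eta_k(\nu_k)$, I would track the Morse index $n_k(\nu)$ across $\nu_k$. For $0\le\nu<\nu_k$ the trace is $2\omega>0$ and $\det m_k(\nu)>0$, so both eigenvalues are positive and $n_k(\nu)=0$; for $\nu>\nu_k$ the determinant turns negative, forcing exactly one negative eigenvalue, so $n_k(\nu)=1$. Combined with $\sigma = \operatorname{sgn}(\omega) = 1$ (which is forced by the assumption $\omega>\omega_k\ge 0$), the definition of $\eta_k$ gives
\[
\eta_k(\nu_k) = \sigma\bigl(n_k(\nu_k-\rho) - n_k(\nu_k+\rho)\bigr) = 1\cdot(0-1) = -1,
\]
as claimed.

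I do not expect a genuine obstacle in this argument; the matrix is only $2\times 2$, Hermitian, and its characteristic polynomial has constant positive trace, so sign changes of the determinant completely determine the motion of eigenvalues. The only point that deserves a line of care is the degenerate case $k=n$, where $\omega_n=0$ forces $\nu_n=0$ and the relevant kernel comes from the generator $A_1\bar a$ of the orbit; I would note that this is consistent with the earlier discussion of the fixed-point block $m_n(0)=B_n$ and with the choice of $\sigma$, so the formula still holds and the proof is complete.
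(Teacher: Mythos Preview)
Your proposal is correct and follows essentially the same route as the paper: both arguments compute the trace $T_k(\nu)=2\omega$ and determinant $d_k(\nu)=4\omega_k(\omega-\omega_k)-\nu^2$ of the $2\times 2$ Hermitian block, read off the Morse index from their signs on either side of $\nu_k$, and combine this with $\sigma=\operatorname{sgn}(\omega)=1$ to obtain $\eta_k(\nu_k)=0-1=-1$. Your write-up is slightly more explicit (you display the matrix and spell out the $\omega<\omega_k$ case), and your closing remark on the degenerate block $k=n$ matches the paper's own comment immediately after the proof.
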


\begin{proof}
The determinant of $m_{k}(\nu)$ is $d_{k}(\nu)=-\nu^{2}+4\omega_{k}%
(\omega-\omega_{k})$, and it changes sign only at $\pm\nu_{k}$ for
$\omega>\omega_{k}$. The trace of $m_{k}(\nu)$ is $T_{k}(\nu)=2\omega$, and
$\omega_{k}$ is positive, then $T_{k}(0)>0$ for $\omega>\omega_{k}$. As a
consequence, one has that $n_{k}(0)=0$ and $n_{k}(\infty)=1$, because
$d_{k}(0)>0$ and $d_{k}(\infty) < 0$. Therefore, $\eta(\nu_{k})=0-1$, since
$\sigma=1$, for $\omega>\omega_{k}$.
\end{proof}

Since $\omega_{n}=0$, then the value $\nu_{n}=0$ is not a bifurcation point
for $k=n$.

\begin{theorem}
For each $k\in\{2,...,n-2\}$ and $\omega>\omega_{k}$, the polygonal
equilibrium has a global bifurcation of periodic solutions from $2\pi/\nu_{k}%
$, with symmetries $\mathbb{\tilde{Z}}_{n}(k)$.
\end{theorem}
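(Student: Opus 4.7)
The plan is simple: the theorem will follow immediately by combining the jump computation of the preceding proposition with the general bifurcation theorem of Section~5. The content is already packaged in those two statements, so what is really needed is only a verification that their hypotheses are fulfilled for the parameter range and for the isotype $k$ in question.

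First I would check the standing assumption $\mu\neq\mu_1,\ldots,\mu_{n-1}$, under which the Liapunov--Schmidt-reduced linearization is invertible at $\nu=0$ and the orthogonal degree of \cite{IzVi03} is well defined on an invariant neighbourhood of $\bar a$. Under the restriction $k\in\{2,\ldots,n-2\}$ together with $\omega>\omega_k$, the preceding proposition exhibits a strictly positive candidate bifurcation value $\nu_k=[4\omega_k(\omega-\omega_k)]^{1/2}$, well separated from the stationary resonances, and produces the non-vanishing jump $\eta_k(\nu_k)=-1$.

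Second, I would invoke the bifurcation theorem of Section~5 with this $k$ and this $\nu_k$. Since $\eta_k(\nu_k)\neq 0$, the component of the orthogonal degree associated to the isotropy $\mathbb{\tilde{Z}}_n(k)$ changes across $\nu_k$ on any small invariant neighbourhood of $(\bar a,\nu_k)$, and this forces a continuum of nontrivial zeros of the reduced bifurcation function. Lifting back through the global Liapunov--Schmidt reduction of Section~3 produces the desired branch of $2\pi/\nu_k$-periodic solutions of the vortex equation whose isotropy contains $\mathbb{\tilde{Z}}_n(k)$; the precise symmetry pattern $u_{j+1}(t)=e^{ij\zeta}u_1(t+jk\zeta)$ for the ring vortices (and the corresponding condition on $u_0$) has already been recorded immediately after the bifurcation theorem.

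The main, and in fact the only, genuinely nontrivial point is interpreting the \emph{global} clause. This, however, is delivered by the abstract orthogonal-degree machinery: the continuum either becomes unbounded in norm or in period, or approaches the collision set $\Psi$, or else closes up with the algebraic sum of the index jumps along its bifurcation points equal to zero. I would therefore not try to exclude any of these three alternatives at this stage of the paper, because the trichotomy is precisely the meaning attached to ``global bifurcation'' here, as made explicit in the discussion following the bifurcation theorem and in \cite{Iz95}.
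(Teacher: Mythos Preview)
Your proposal is correct and matches the paper's approach exactly: the theorem is stated in the paper without a separate proof, since it follows immediately by combining the preceding proposition (which gives $\eta_k(\nu_k)=-1$ for $\omega>\omega_k$) with the general bifurcation theorem of Section~5 under the standing assumption $\mu\neq\mu_1,\ldots,\mu_{n-1}$. Your discussion of the meaning of ``global'' also coincides with the paper's own remarks following the bifurcation theorem.
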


Sine the bifurcation points have the same sign $\eta_{k}=-1$, then these
branches cannot return to the same equilibrium. Therefore, the bifurcation
branch is non- admissible or goes to another equilibrium.

\subsubsection*{Blocks $k\in\{1,n-1\}$}

Due to the equality $n_{n-1}(\nu)=n_{1}(-\nu)$, one may analyze the spectrum
of the block $m_{1}(\nu)$ in $\mathbb{R}^{+}$, instead of the spectrum of
$m_{n-1}(\nu)$ in $\mathbb{R}^{+}$.%
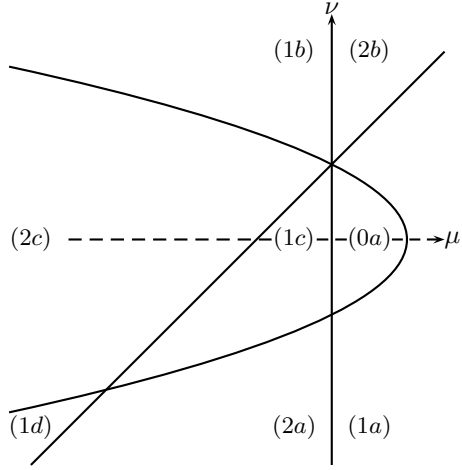
\begin{figure}[ht]
\centering\subfloat{\begin{pspicture}(-4.5,-3)(1.5,3)\SpecialCoor
\psline[linestyle=dashed]{->}(.8,0)(1.5,0)
\psline[linestyle=dashed](-3.5,0)(-.8,0)
\psline[linestyle=dashed](-.2,0)(.2,0)
\rput[l](1.5,0){$\mu$}
\rput[b](0,3){$\nu$}
\psline{->}(0,-3)(0,3)
\psline(-4,-3)(1.5,2.5)
\rput{90}{\parabola(2.3,4.29)(0,-1)}
\rput(.5,2.5){\small$(2b)$}
\rput(-.5,-2.5){\small$(2a)$}
\rput(-.5,2.5){\small$(1b)$}
\rput(.5,-2.5){\small$(1a)$}
\rput(-4,0){\small$(2c)$}
\rput(-.5,0){\small$(1c)$}
\rput(-4,-2.5){\small$(1d)$}
\rput(.5,0){\small$(0a)$}
\NormalCoor\end{pspicture}}
\caption{Graph $d_{1}(\mu,\nu)=0$.}
\end{figure}%

\begin{proposition}
The matrix $m_{1}(\nu)$ changes its Morse index only at the curves $\mu=0$ for
$\nu\in\mathbb{R}$, $\nu_{0}(\mu)=\mu+s_{1}$ for $\mu\in\mathbb{R}$, and
\[
\nu_{\pm}(\mu)=\pm\sqrt{s_{1}^{2}-\mu}\text{ for }\mu\in(-\infty,s_{1}%
^{2})\text{.}%
\]
Moreover, the Morse number of $m_{1}(\nu)$ in the eight regions are: $n_{1}=0$
in the regions (0x), $n_{1}=1$ in the regions (1x), $n_{1}=2$ in the regions (2x).
\end{proposition}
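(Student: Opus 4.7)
Since $m_1(\nu)$ is Hermitian (both $B_1$ and $\operatorname{diag}(\mu,iJ)$ are), its eigenvalues are real and depend continuously on $(\mu,\nu)$, so the Morse number is locally constant off the zero set of $\det m_1(\nu)$. The whole proposition thus reduces to (i) factoring $\det m_1(\nu)$ and (ii) pinning down the signature of $m_1(\nu)$ in one representative of each region.

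For (i), I would expand $\det m_1(\nu)$ by cofactors along the first row of
\[
m_1(\nu)=\begin{pmatrix} \mu(s_1+\mu-\nu) & -\sqrt{n/2}\,\mu & -\sqrt{n/2}\,\mu\,i \\ -\sqrt{n/2}\,\mu & s_1+2\mu & \nu i \\ \sqrt{n/2}\,\mu\,i & -\nu i & s_1 \end{pmatrix},
\]
and observe that the factor $(s_1+\mu-\nu)$ pops out of the sum of the three cofactors. After using the identity $n=2s_1+1$ to simplify the remaining bracket, one obtains
\[
\det m_1(\nu)=\mu\,(\mu+s_1-\nu)\,(s_1^2-\mu-\nu^2).
\]
This is exactly the union of the vertical line $\mu=0$, the line $\nu=\mu+s_1$, and the left-opening parabola $\nu=\pm\sqrt{s_1^2-\mu}$ listed in the statement. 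Away from the pairwise intersections of these curves the determinant has only simple zeros, so only one eigenvalue passes through zero at a time and $n_1$ jumps by exactly $\pm 1$ across each curve.

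For (ii), the fastest route is to apply the Jacobi signature theorem at one interior point of each region: the three leading principal minors of $m_1(\nu)$ are explicit polynomials in $(\mu,\nu)$, and the Morse number equals the number of sign changes in $(1,D_1,D_2,D_3)$. Anchoring at $(\epsilon,0)$ with $\epsilon>0$ small gives $D_1=\mu(s_1+\mu)$, $D_2=\mu[s_1^2+3s_1\mu+2\mu^2-\tfrac{n}{2}\mu]$, and $D_3=\mu(s_1+\mu)(s_1^2-\mu)$, all positive for small $\mu>0$, so $n_1=0$ in $(0a)$. Repeating the same finite check at one point of each of the other seven regions produces $n_1=1$ in $(1a)$--$(1d)$ and $n_1=2$ in $(2a)$--$(2c)$. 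A parallel route is to propagate from $(0a)$ across each curve by first-order perturbation theory of the eigenvalue that is crossing zero (for example, across $\mu=0$ that eigenvalue has the expansion $\mu(s_1-\nu)+O(\mu^2)$, whose sign is controlled by $s_1-\nu$), but this requires a more delicate sign analysis at the intersection points.

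The main obstacle is therefore the combinatorial bookkeeping in (ii): the three curves meet at $(0,\pm s_1)$ and the line and parabola meet again at $(-n,-(n+1)/2)$, so in the perturbative route one must be careful never to cross two zero-curves simultaneously and to check that different paths between two regions yield the same jump. Performing the direct Jacobi-signature computation at one fixed point of each of the eight regions sidesteps this entirely and is the route I would take in the final write-up.
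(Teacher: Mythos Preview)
Your proposal is correct and the determinant factorization you obtain coincides with the paper's. The route for part (ii), however, is genuinely different. The paper does not invoke the Jacobi signature rule; instead it determines $n_1$ in the unbounded regions by comparing $m_1(\nu)$ with its leading part $-\nu\,\operatorname{diag}(\mu,iJ)$ for $|\nu|$ large, and in the bounded regions by combining the parity constraint coming from the sign of $\det m_1$ with the sign of the trace $T_1(\mu,\nu)=2\mu+2s_1+\mu(\mu+s_1-\nu)$ at well-chosen points (in particular at the double point $(\mu_0,\nu_0)=(-2s_1-1,-s_1-1)$ and along the ray $\bar\nu=(\nu_0+\nu_-)/2$ as $\mu\to-\infty$), plus continuity of the spectrum near $(0,0)$. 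Your Jacobi-minor approach is more mechanical and avoids the asymptotic and trace arguments entirely, at the price of having to choose, in each region, a point at which the two smaller leading minors do not accidentally vanish (otherwise the sign-change count is undefined); the paper's approach buys a cleaner explanation of why $n_1$ jumps the way it does across each curve and connects directly to the spectral-stability discussion that follows.
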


\begin{proof}
The block $m_{1}(\nu)$ is
\[
m_{1}(\nu)=\left(
\begin{array}
[c]{ccc}%
\mu\left(  -\nu+s_{1}+\mu\right)  & -\left(  n/2\right)  ^{1/2}\mu & -\left(
n/2\right)  ^{1/2}\mu i\\
-\left(  n/2\right)  ^{1/2}\mu & s_{1}+2\mu & i\nu\\
\left(  n/2\right)  ^{1/2}\mu i & -i\nu & s_{1}%
\end{array}
\right)  \text{.}%
\]
Since $n=2s_{1}+1$, then the determinant is%
\[
d_{1}(\nu)=\mu\left(  \nu-(\mu+s_{1})\right)  \left(  \nu^{2}-(s_{1}^{2}%
-\mu)\right)  \text{.}%
\]
Thus, the matrix $m_{1}(\nu)$ changes its Morse index only at $\mu=0$,
$\nu_{0}(\mu)$ and $\nu_{\pm}(\mu)$. Moreover, the curves $\nu_{0}(\mu)$ and
$\nu_{-}(\mu)$ intersect at the point%
\[
(\mu_{0},\nu_{0})=(-2s_{1}-1,-s_{1}-1).
\]
Therefore, the plane $(\mu,\nu)$ is divided in eight regions as shown in the graph.

For $\nu$ big enough, the Morse index of the matrices $m_{1}(\nu)$ and $-\nu
diag(\mu,iJ)$ are the same. Thus, the Morse indices are $n_{1}(\infty)=2$ and
$n_{1}(-\infty)=1$ for $\mu>0$, and $n_{1}(\infty)=1$ and $n_{1}(-\infty)=2$
for $\mu<0$. Therefore, one gets that $n_{1}=2$ in the regions (2a) and (2b),
and $n_{1}=1$ in the regions (1a) and (1b).

The region (1d) lies between the curves $\nu_{0}(\mu)$ and $\nu_{-}(\mu)$ for
$\mu\in(-\infty,-2s_{1}-1)$. Since the determinant $d_{1}$ is negative in
(1d), then the Morse index satisfies $n_{1}\in\{1,3\}$. Now, the trace of
$m_{1}(\nu)$ is
\[
T_{1}(\mu,\nu)=2\mu+2s_{1}+\mu\left(  \mu+s_{1}-\nu\right)  \text{.}%
\]
Setting $\bar{\nu}=(\nu_{0}+\nu_{-})/2$, one has that $\bar{\nu}=\mu/2+o(\mu)$
and $T_{1}(\bar{\nu})=\mu^{2}/2+o(\mu^{2})$ when $\mu\rightarrow\infty$. Since
$T_{1}(\bar{\nu})$ is positive when $\mu\rightarrow-\infty$, then $n_{1}\neq
3$. Therefore, the Morse index must be $n_{1}=1$ in the region (1d).

For $\nu=0$, the determinant is%
\[
d_{1}(\mu)=-\mu(\mu+s_{1})\left(  \mu-s_{1}^{2}\right)  \text{.}%
\]
Hence, $d_{1}>0$ in the region (2c), $d_{1}<0$ in the region (1c), and
$d_{1}>0$ in the region (0a). We conclude that $n_{1}\in\{1,3\}$ in (1c) and
$n_{1}\in\{0,2\}$ in (0a). Since $m_{1}(\mu,\nu)$ is continuous and
$m_{1}(0,0)=diag(0,s_{1},s_{1})$, then $n_{1}\leq1$ for $(\mu,\nu)$ near
$(0,0)$. Thus, one has that $n_{1}=1$ in the region (1c), and $n_{1}=0$ in the
region (0a).

Since $d_{1}>0$ in (2c), then $n_{1}\in\{0,2\}$. At the point $(\mu_{0}%
,\nu_{0})$ the trace $T_{1}=-2(s_{1}+1)$ is negative. Since $T_{1}(\mu,\nu)$
is continuous, then $T_{1}<0$ for $(\nu,\mu)$ near $(\mu_{0},\nu_{0})$. Thus,
$n_{1}\neq0$, and the Morse index must be $n_{1}=2$ in the region (2c).
\end{proof}

In the previous section we have found that $n_{1}(\mu,\nu)$ changes its Morse
index on the curves $\nu_{\ast}(\mu)$, for $\ast\in\{0,+,-\}$. Using the
equality $n_{n-1}(\nu)=n_{1}(-\nu)$, we get the following result.

\begin{theorem}
For each $k\in\{1,n-1\}$ such that $\mu\in(-\infty,s_{1}^{2})$, the polygonal
relative equilibrium has a global bifurcation of periodic solutions starting
from $2\pi/\nu_{+}$ with symmetries $\mathbb{\tilde{Z}}_{n}(k)$. Moreover, for
$k=1$ with $\mu\in(-s_{1},\infty)$ and for $k=n-1$ with $\mu\in(-\infty
,-s_{1})$, there is another bifurcation of periodic solutions with symmetries
$\mathbb{\tilde{Z}}_{n}(k)$.
\end{theorem}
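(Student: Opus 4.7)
The plan is to apply the Bifurcation Theorem of the previous section by showing that, at each of the frequencies named in the statement, the jump $\eta_{k}$ of the orthogonal index is non-zero; the global conclusion then follows for free.

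For $k=1$ I would first isolate the positive-$\nu$ zeros of $\det m_{1}$ at fixed $\mu$. The factorization
\[
d_{1}(\nu)=\mu\bigl(\nu-(\mu+s_{1})\bigr)\bigl(\nu^{2}-(s_{1}^{2}-\mu)\bigr)
\]
from the previous proposition exhibits these as exactly $\nu_{+}(\mu)=\sqrt{s_{1}^{2}-\mu}$, available for $\mu<s_{1}^{2}$, and $\nu_{0}(\mu)=\mu+s_{1}$, positive iff $\mu>-s_{1}$. Each factor of $d_{1}$ has a simple root along its own curve away from the two crossing points $(0,s_{1})$ and $(-2s_{1}-1,-s_{1}-1)$, so a single eigenvalue of $m_{1}$ passes through zero as $\nu$ crosses each curve and the Morse-index jump $n_{1}(\nu_{\ast}-\rho)-n_{1}(\nu_{\ast}+\rho)$ is therefore $\pm 1$. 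Since $\sigma=sgn(\omega)=sgn(\mu+s_{1})$ is non-zero off $\mu=-s_{1}$, this gives $\eta_{1}(\nu_{\ast})=\pm 1\neq 0$ at both $\nu_{+}$ and $\nu_{0}$, and the Bifurcation Theorem produces the asserted global branches with isotropy $\mathbb{\tilde{Z}}_{n}(1)$.

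For $k=n-1$ I would invoke the identity $n_{n-1}(\nu)=n_{1}(-\nu)$ from the beginning of the section. Under the reflection $\nu\mapsto-\nu$ the positive-$\nu$ bifurcation frequencies of $m_{n-1}$ correspond to the negative-$\nu$ zeros of $\det m_{1}$: namely $-\nu_{-}(\mu)=\nu_{+}(\mu)$ for $\mu<s_{1}^{2}$ and $-\nu_{0}(\mu)=-(\mu+s_{1})$, positive precisely when $\mu<-s_{1}$. The same one-eigenvalue-crossing argument yields $\eta_{n-1}=\pm 1\neq 0$, and the Bifurcation Theorem delivers the remaining branches.

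The main obstacle is purely bookkeeping: the sign $\sigma$ flips across $\mu=-s_{1}$, and the relative order of $\nu_{+}$ and $\nu_{0}$ flips across $\mu=0$, so to explicitly name the Morse numbers on either side of each curve one splits into the four sub-intervals $(-\infty,-s_{1})$, $(-s_{1},0)$, $(0,s_{1}^{2})$, $(s_{1}^{2},\infty)$ and consults the eight-region diagram. The values $\mu\in\{-s_{1},\mu_{1},\dots,\mu_{n-1}\}$ are excluded, the first because $\sigma=0$ and the others because they are bifurcation points of stationary solutions already treated in \cite{GaIz11}; at every other admissible $\mu$, the non-vanishing of $\eta_{k}$ is automatic, since a simple zero of the analytic function $\det m_{k}$ cannot produce a trivial Morse-index jump.
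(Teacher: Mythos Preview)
Your argument is correct and follows the same overall architecture as the paper: read off the positive-$\nu$ zeros of $\det m_{1}$ from the factorization, check that $\eta_{1}$ is non-zero there, transfer to $k=n-1$ via $n_{n-1}(\nu)=n_{1}(-\nu)$, and invoke the Bifurcation Theorem.

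The one methodological difference is how non-vanishing of $\eta_{k}$ is established. The paper does not argue abstractly from simplicity of the determinant zero; instead it relies on the preceding Proposition, which computes the Morse number $n_{1}$ explicitly in each of the eight regions of the $(\mu,\nu)$-plane. From that table one reads off the exact value of $\eta_{k}(\nu_{\ast})\in\{\pm 1\}$ at every crossing, not merely that it is non-zero. Your shortcut---a simple zero of $\det m_{1}$ forces exactly one eigenvalue to change sign, hence a Morse-index jump of $\pm 1$---is perfectly valid and more economical for the bare statement of the theorem. What it sacrifices is the sign information that the paper then uses in the paragraph immediately following the theorem, where knowing that all $\eta_{k}$ have the same sign on certain $\mu$-intervals allows one to conclude that the branches cannot return to $\bar{a}$. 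So your route proves the theorem as stated, while the paper's region-by-region bookkeeping also feeds the subsequent global remark.
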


Actually, one may find all the numbers $\eta_{k}(\nu)$, for $k\in\{1,n-1\}$,
using that $\sigma=sgn(\omega)$. Thus, one may see that the bifurcations with
symmetries $\mathbb{\tilde{Z}}_{n}(1)$ have all the same index for $\mu
\in(-\infty,-s_{1})\cup(s_{1}^{2},\infty)$, and also with the symmetries
$\mathbb{\tilde{Z}}_{n}(n-1)$ for $\mu\in(-s_{1},s_{1}^{2})$. Therefore, these
bifurcating branches cannot return to the equilibrium $\bar{a}$, and must be
non- admissible or go to another equilibrium.

For $\mu=0$, the two blocks are
\[
m_{1}(\nu)=\left(
\begin{array}
[c]{cc}%
s_{1} & i\nu\\
-i\nu & s_{1}%
\end{array}
\right)  =m_{n-1}(\nu)\text{.}%
\]
Thus, the determinant $\det m_{1}(\nu)=s_{1}^{2}-\nu^{2}$ is zero at $\pm
s_{1}$ with $\eta_{1}(s_{1})=\eta_{n-1}(s_{1})=-1$. Therefore, there is only
one global bifurcation of periodic solutions from $2\pi/s_{1}$ with symmetries
$\mathbb{\tilde{Z}}_{n}(k)$ for $k\in\{1,n-1\}$. The branches are
non-admissible or go to another equilibrium.

\begin{remark}
We have proven that $m_{k}(\nu)$, for $k\in\{2,...,n-2\}$, is non-invertible
at two points only if $\mu>\mu_{k}$. Also, we proved that $m_{k}(\nu)$, for
$k\in\{1,n-1\}$, is non-invertible at three points only if $\mu<\mu_{1}$.
Moreover, the determinant of $m_{n}(\nu)$ has a double zero at $\nu=0$, due to
the symmetries. Since the $\mu_{k}$'s are increasing for $k\in\{2,...,[n/2]\}$%
, the determinant of the matrix $M(\nu)$ has $2(n+1)$ zeros counted with
multiplicity for%
\[
\mu\in(\mu_{\lbrack n/2]},\mu_{1})\text{.}%
\]

From the previous fact, one may conclude, analogously to the $n$-body problem,
\cite{GaIz112}, that the polygonal relative equilibrium is spectrally stable
only if $\mu\in(\mu_{\lbrack n/2]},\mu_{1})$, where $\mu_{1}=(n-1)^{2}/4$ and
$\mu_{k}=\left(  -k^{2}+nk-2n+2\right)  /4$. This fact is proved in the paper
\cite{CaSc00}.
\end{remark}

\subsection{Filaments}

Next we wish to analyze the spectrum of traveling waves in the filaments. We
have two parameters: the traveling wave velocity $\gamma$ and the frequency
$\omega$.

\subsubsection*{Blocks $k\in\{2,...,n-2,n\}$}

\begin{proposition}
Define $\nu_{+}$\ and $\nu_{-}$\ as%
\[
\nu_{\pm}=\left[  (2\gamma^{2}-\omega)\pm\sqrt{(2\gamma^{2}-\omega
)^{2}-4\omega_{k}(\omega-\omega_{k})}\right]  ^{1/2}\text{.}%
\]
For $\omega<\omega_{k}$, and any $\gamma\in\mathbb{R}$, the matrix $m_{k}%
(\nu)$ changes its Morse index at the positive value $\nu_{+}$, with%
\[
\eta_{k}(\nu_{+})=sgn(\omega)\emph{.}%
\]
For $\omega>\omega_{k}$, and any $\gamma^{2}>\omega/2+(\omega_{k}%
(\omega-\omega_{k}))^{1/2}$, the matrix $m_{k}(\nu)$ changes its Morse index
at the positive values $\nu_{\pm}$, with%
\[
\eta_{k}(\nu_{\pm})=\pm1\emph{.}%
\]

\end{proposition}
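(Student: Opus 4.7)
The plan is to analyze the characteristic polynomial of the $2\times 2$ Hermitian block $m_k(\nu)$ and then track the Morse index $n_k(\nu)$ as $\nu$ moves along the positive real axis. With $B_k = 2\,\mathrm{diag}(\omega - \omega_k,\omega_k)$ and the skew matrix $iJ$ Hermitian with eigenvalues $\pm 1$, one writes $m_k(\nu)$ explicitly and obtains the determinant
\[
d_k(\nu) = (\nu^2 + 2(\omega-\omega_k))(\nu^2 + 2\omega_k) - 4\gamma^2\nu^2 = \nu^4 - 2(2\gamma^2-\omega)\nu^2 + 4\omega_k(\omega-\omega_k).
\]
Viewed as a quadratic in $x = \nu^2$, the quadratic formula delivers precisely $\nu_\pm^2 = (2\gamma^2-\omega) \pm \sqrt{(2\gamma^2-\omega)^2 - 4\omega_k(\omega-\omega_k)}$, matching the stated expressions.

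Next I would sort out when these candidate roots produce real positive $\nu$. In the regime $\omega < \omega_k$ the constant term $4\omega_k(\omega-\omega_k)$ is negative, so the product of the two $x$-roots is negative; only the larger root is positive, yielding $\nu_+>0$ for every $\gamma\in\mathbb{R}$. In the regime $\omega>\omega_k$ the constant term is positive, so the two $x$-roots share a sign; they are both positive iff their sum $2(2\gamma^2-\omega)$ is positive and the discriminant is non-negative, which combine into the hypothesis $\gamma^2 > \omega/2 + \sqrt{\omega_k(\omega-\omega_k)}$.

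Then I would pin down $n_k(\nu)$ on $\mathbb{R}^+$ using asymptotics together with the sign of $d_k$. Since $m_k(\nu)\sim \nu^2 I$ as $\nu\to\infty$, one has $n_k(\infty)=0$, and since $m_k(0)=B_k$, one reads off $n_k(0)=0$ when $\omega>\omega_k$ and $n_k(0)=1$ when $\omega<\omega_k$ (for $k=n$ this Morse number is taken on the orthogonal complement of $e_2$, as in the earlier definition of $\sigma$). In the first regime the only positive zero of $d_k$ is $\nu_+$, so the index must drop from $1$ to $0$ there by continuity; in the second regime $d_k$ is negative precisely on $(\nu_-,\nu_+)$, which together with $n_k(0)=n_k(\infty)=0$ forces $n_k=1$ on $(\nu_-,\nu_+)$ and $n_k=0$ on the complementary intervals in $\mathbb{R}^+$.

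Finally, I read off the jumps using $\eta_k(\nu_0) = \sigma\{n_k(\nu_0-\rho)-n_k(\nu_0+\rho)\}$. For $\omega<\omega_k$ this gives $\eta_k(\nu_+) = \sigma(1-0) = \mathrm{sgn}(\omega)$. For $\omega>\omega_k$ one has $\omega_k\geq 0$, so $\sigma = 1$, and the jumps come out as $\eta_k(\nu_-) = 0-1 = -1$ and $\eta_k(\nu_+) = 1-0 = +1$, i.e.\ $\eta_k(\nu_\pm)=\pm 1$. There is no serious obstacle; the only point that deserves attention, and the one place I would verify carefully, is the case $k=n$, where $\omega_n=0$ makes $B_n$ singular along the orbit direction $e_2$, but this kernel is already factored out when defining both $\sigma$ and the Morse index, so the bookkeeping on the complementary subspace is identical and the stated formulas hold uniformly for all $k\in\{2,\dots,n-2,n\}$.
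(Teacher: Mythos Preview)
Your proof is correct and follows essentially the same route as the paper: compute the determinant $d_k(\nu)=\nu^4-2(2\gamma^2-\omega)\nu^2+4\omega_k(\omega-\omega_k)$, locate its positive roots via the quadratic in $\nu^2$, determine $n_k(0)$ and $n_k(\infty)$ directly from $B_k$ and the $\nu^2 I$ asymptotics, and read off the jumps. The only cosmetic difference is that the paper invokes the trace $T_k(0)=2\omega>0$ together with $d_k(0)>0$ to get $n_k(0)=0$ in the second regime, whereas you read the eigenvalues of $B_k$ directly; your closing remark about $k=n$ is slightly over-cautious (the Morse index simply ignores the zero eigenvalue, so no special bookkeeping is needed), and the paper instead just notes afterward that $\nu_-=0$ there.
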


\begin{proof}
The block $m_{k}(\nu)$ is given by%
\[
m_{k}(\nu)=\nu^{2}-2\gamma\nu iJ+2diag\left(  \omega-\omega_{k},\omega
_{k}\right)  .
\]
Thus, the trace is $T_{k}(\mu)=2(\nu^{2}+\omega)$ and the determinant is%
\begin{align*}
d_{k}(\nu)  &  =\nu^{4}-2(2\gamma^{2}-\omega)\nu^{2}+4\omega_{k}(\omega
-\omega_{k})\\
&  =(\nu^{2}-\nu_{+}^{2})(\nu^{2}-\nu_{-}^{2}).
\end{align*}
Therefore, the determinant is zero at the four roots $\pm\nu_{\pm}$.

For $\omega<\omega_{k}$ and $\gamma\in\mathbb{R}$, only the value $\nu_{+}$ is
positive. Since $d_{k}(0)=4\omega_{k}(\omega-\omega_{k})<0$, then $n_{k}%
(0)=1$.\ Since all eigenvalues of $m_{k}(\nu)$ are positive for big $\nu$,
then $n(\infty)=0$. Hence, the change in the Morse index is $\eta(\nu
_{+})=\sigma(1-0)$, with $\sigma=sgn(\omega).$

For $\omega>\omega_{k}$ and $\gamma^{2}>\omega/2+(\omega_{k}(\omega-\omega
_{k}))^{1/2}$, the two values $\nu_{\pm}$ are positive. Since $\omega_{k}$ is
positive, then $\sigma=sgn(\omega)=1$. Since $d_{k}(0)>0$ and $T_{k}(0)>0$,
then $n_{k}(0)=0$, and as before, one has that $n_{k}(\infty)=0$. Moreover,
using that $\det m(\nu)$ is negative between $\nu_{-}$ and $\nu_{+}$, then
$n_{k}(\nu)=1$ for $\nu\in(\nu_{-},\nu_{+})$. Thus, we conclude that $\eta
(\nu_{-})=-1$ and $\eta(\nu_{+})=1.$
\end{proof}

Since $\omega_{n}=0$, then $\nu_{-}=0$ for $k=n$. Thus, in the following
theorem there is no bifurcation from $2\pi/\nu_{-}$, for $k=n$.

\begin{theorem}
For each $k\in\{2,...,n-2,n\}$ such that $\omega<\omega_{k}$, the polygonal
equilibrium has a global bifurcation of traveling waves from $2\pi/\nu_{+}$,
for each $\gamma\in\mathbb{R}$, with symmetries $\mathbb{\tilde{Z}}_{n}(k)$.
For each $k\in\{2,...,n-2,n\}$ such that $\omega>\omega_{k}$, there are two
bifurcations from $2\pi/\nu_{+}$ and $2\pi/\nu_{-}$, for each $\gamma
^{2}>\omega/2+(\omega_{k}(\omega-\omega_{k}))^{1/2}$, with symmetries
$\mathbb{\tilde{Z}}_{n}(k)$.
\end{theorem}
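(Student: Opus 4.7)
The plan is to treat this theorem as a direct application of the general bifurcation theorem from the section on Bifurcation, fed with the explicit jump computations carried out in the immediately preceding proposition. Recall that the bifurcation theorem guarantees a global branch of $2\pi/\nu_{0}$-periodic solutions with isotropy $\mathbb{\tilde{Z}}_{n}(k)$ whenever $\eta_{k}(\nu_{0})\neq 0$. Hence the proof reduces to identifying the genuine sign changes of the orthogonal index on the positive $\nu$-axis, which the proposition has already tabulated.

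First I would address the subcritical regime $\omega<\omega_{k}$, with $\gamma\in\mathbb{R}$ arbitrary. The proposition shows that only $\nu_{+}$ is a positive root of $d_{k}$, with $\eta_{k}(\nu_{+})=\mathrm{sgn}(\omega)$. This is nonzero except on the isolated hyperplane $\omega=0$ (i.e.\ $\mu=-s_{1}$), which we exclude as in the standing hypothesis $\mu\neq\mu_{1},\dots,\mu_{n-1}$ from the Bifurcation section. Applying the bifurcation theorem yields the claimed global branch emanating from $2\pi/\nu_{+}$ with symmetry $\mathbb{\tilde{Z}}_{n}(k)$.

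Second I would handle the supercritical regime $\omega>\omega_{k}$. The condition $\gamma^{2}>\omega/2+(\omega_{k}(\omega-\omega_{k}))^{1/2}$ is exactly what makes the discriminant $(2\gamma^{2}-\omega)^{2}-4\omega_{k}(\omega-\omega_{k})$ strictly positive and $2\gamma^{2}-\omega$ strictly positive, so that $\nu_{+}$ and $\nu_{-}$ are real, distinct, and both positive. In this case $\omega_{k}>0$ implies $\omega>0$, so $\sigma=1$, and the proposition gives $\eta_{k}(\nu_{\pm})=\pm 1\neq 0$. The bifurcation theorem therefore produces two independent global branches, one from each $2\pi/\nu_{\pm}$, both with isotropy $\mathbb{\tilde{Z}}_{n}(k)$. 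The exceptional case $k=n$ is handled by noting, as already observed in the text, that $\omega_{n}=0$ collapses $\nu_{-}$ to zero, so it is excluded from the statement; the value $\nu_{+}$, however, remains a legitimate bifurcation point.

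The only subtle step is justifying the \emph{global} nature of the two branches in the supercritical case: the opposite signs $\eta_{k}(\nu_{\pm})=\pm 1$ leave open a priori the possibility that the two local branches could reconnect inside the equilibrium $\bar{a}$ without contradicting the sum-of-jumps alternative of the orthogonal degree. This, however, is harmless for the present statement, which merely asserts global bifurcation in the sense of \cite{Iz95}: either each branch is unbounded in norm or period, reaches the collision set $\Psi$, or the sum of the indices along connected components vanishes. No further computation is needed beyond what the proposition already provides, so this reduction-to-the-proposition is indeed the entire proof.
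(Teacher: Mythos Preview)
Your proposal is correct and mirrors the paper's own treatment: the theorem is stated immediately after the proposition with no separate proof, since it is nothing more than the general bifurcation theorem applied to the nonzero jumps $\eta_{k}(\nu_{\pm})$ computed there, together with the remark that $\omega_{n}=0$ kills the branch at $\nu_{-}$ for $k=n$. One small inaccuracy: the exclusion $\omega\neq 0$ (needed so that $\sigma=\mathrm{sgn}(\omega)$ is defined) does not in general coincide with one of the standing exclusions $\mu\neq\mu_{k}$; it is simply an additional degenerate value that the paper tacitly avoids, but this does not affect the argument.
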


The bifurcating branches cannot return to the polygonal equilibrium for
$\omega<\omega_{k}$.

\subsubsection*{Blocks $k\in\{1,n-1\}$}

Here the block $m_{1}(\nu)$ is%
\[
m_{1}(\nu)=\left(
\begin{array}
[c]{ccc}%
\mu\left(  \mu\nu^{2}-2\gamma\nu+s_{1}+\mu\right)  & -\left(  n/2\right)
^{1/2}\mu & -\left(  n/2\right)  ^{1/2}\mu i\\
-\left(  n/2\right)  ^{1/2}\mu & \nu^{2}+s_{1}+2\mu & 2\gamma\nu i\\
\left(  n/2\right)  ^{1/2}\mu i & -2\gamma\nu i & \nu^{2}+s_{1}%
\end{array}
\right)  \text{.}%
\]
The determinant of this matrix is a polynomial of degree six in $\nu$, without
an explicit factorization. So, analytically one is able to analyze only some
special cases.

For $\mu\in(-s_{1},0)\cup(s_{1}^{2},\infty)$, the determinant at $\nu=0$ is
negative,%
\[
d_{1}(0)=-\mu(\mu+s_{1})(\mu-s_{1}^{2})<0.
\]
Moreover, since the determinant $d_{1}(\nu)$ is positive for big $\left\vert
\nu\right\vert $ , then the matrix $m_{1}(\nu_{k})$ must change its Morse
index at least at some values $\nu_{1}$ and $-\nu_{n-1}$. Thus, one has that
$\eta_{k}(\nu_{k})\neq0$ for $k\in\{1,n-1\}$.

\begin{theorem}
For each $k\in\{1,n-1\}$ such that $\mu\in(-s_{1},0)\cup(s_{1}^{2},\infty)$,
the polygonal relative equilibrium has at least one global bifurcation of
traveling waves for each velocity $\gamma\in\mathbb{R}$, starting from the
period $2\pi/\nu_{k}$, and with symmetries $\mathbb{\tilde{Z}}_{n}(k)$.
\end{theorem}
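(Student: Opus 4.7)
The plan is to apply the general bifurcation theorem stated earlier in the paper by verifying, for each $k\in\{1,n-1\}$, that the jump $\eta_{k}(\nu_{k})$ is non--zero at some positive $\nu_{k}$. The paragraph preceding the statement already sketches the core of the argument; what remains is to organise the pieces cleanly and to check the sign condition coming from $\sigma$.

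First I would examine the behaviour of $d_{1}(\nu):=\det m_{1}(\nu)$ at infinity. Inspection of the explicit matrix shows that the only $\nu^{6}$ contribution arises from the product of the diagonal entries, giving leading term $\mu^{2}\nu^{6}$, while the off--diagonal entries contribute at most order $\nu$. Hence $d_{1}(\nu)\to+\infty$ as $|\nu|\to\infty$, provided $\mu\neq 0$, which holds throughout the range under consideration. Next, specialising to $\nu=0$ reduces $m_{1}$ to the vortex block $B_{1}$, whose determinant was computed above as
\[
d_{1}(0)=-\mu(\mu+s_{1})(\mu-s_{1}^{2}).
\]
A direct sign check gives $d_{1}(0)<0$ on each of the two intervals $(-s_{1},0)$ and $(s_{1}^{2},\infty)$.

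Since $d_{1}$ is a continuous polynomial that is negative at $0$ and tends to $+\infty$ at $\pm\infty$, the intermediate value theorem furnishes a smallest positive root $\nu_{1}>0$ and a largest negative root $-\nu_{n-1}<0$ at which $d_{1}$ changes sign. At each such crossing an odd number of eigenvalues of the Hermitian matrix $m_{1}(\nu)$ pass through zero, so the Morse index $n_{1}$ jumps by an odd integer. Using the symmetry $n_{n-1}(\nu)=n_{1}(-\nu)$ coming from $m_{n-1}(\nu)=\bar{m}_{1}(-\nu)$, the sign change on the negative axis translates into an odd jump of $n_{n-1}$ at the positive value $\nu_{n-1}$.

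Finally, on both intervals one has $\omega=\mu+s_{1}>0$, so $\sigma=sgn(\omega)=1$, and consequently $\eta_{k}(\nu_{k})\neq 0$ for $k\in\{1,n-1\}$. The bifurcation theorem then delivers, for every $\gamma\in\mathbb{R}$, a global branch of $2\pi/\nu_{k}$--periodic traveling waves with isotropy $\mathbb{\tilde{Z}}_{n}(k)$. The main obstacle is that, unlike the two--by--two blocks, the sextic $d_{1}(\nu)$ admits no useful explicit factorisation, so the positions of the bifurcation frequencies $\nu_{k}$ remain implicit and no quantitative description of the branches (nor control on whether additional bifurcation points exist for generic $\mu$) follows from this argument alone.
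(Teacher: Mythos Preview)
Your proposal is correct and follows essentially the same route as the paper: compare the sign of $d_{1}(0)=-\mu(\mu+s_{1})(\mu-s_{1}^{2})<0$ with the positivity of $d_{1}(\nu)$ for large $|\nu|$, deduce a sign change on each half--axis, translate this into an odd jump of the Morse index (using $n_{n-1}(\nu)=n_{1}(-\nu)$ for $k=n-1$), and conclude $\eta_{k}(\nu_{k})\neq 0$. Your write-up simply supplies more detail than the paper's terse paragraph---in particular the explicit leading term $\mu^{2}\nu^{6}$ and the observation that $\sigma=\mathrm{sgn}(\omega)=1$ on both intervals (the latter is actually unnecessary for the bare conclusion $\eta_{k}\neq 0$, since $\sigma=\pm 1$ in any case).
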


Now, we analyze completely the particular case $\mu=1$, that is when all the
$n+1$ filaments have the same circulation, with%
\[
\omega=s_{1}+1.
\]

\begin{proposition}
Define $\bar{\nu}_{\pm}$ and $\nu_{\pm}$ as%
\[
\bar{\nu}_{\pm}=\gamma\pm\left(  \gamma^{2}-\omega\right)  ^{1/2}\text{ and
}\nu_{\pm}=\left(  -b \pm\sqrt{b^{2}-c}\right)  ^{1/2}\text{,}%
\]
where $b=\omega-2\gamma^{2}$ and $c=\omega^{2}-2\omega>0$.

\begin{description}
\item[(a)] For $(\omega+\sqrt{c})/2<\gamma^{2}<\omega$, the matrix $m_{1}%
(\nu)$ changes its Morse index at $\pm\nu_{\pm}\, $with
\[
\eta_{1}(\nu_{\pm})=\pm1\emph{.}%
\]

\item[(b)] For $\omega<\gamma^{2}$, the matrix $m_{1}(\nu)$ changes its Morse
index at $\bar{\nu}_{\pm}$ and $\pm\nu_{\pm}$, with%
\[
\eta_{1}(\bar{\nu}_{\pm})=\pm1\text{ and }\eta_{1}(\nu_{\pm})=\pm1\emph{.}%
\]

\end{description}
\end{proposition}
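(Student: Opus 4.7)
The plan is to compute $d_1=\det m_1(\nu)$ explicitly, factor it to locate all real roots, determine the Morse index $n_1(\nu)$ on each connected component of $\{d_1\neq 0\}$, and then read off $\eta_1$ as signed jumps of $n_1$ multiplied by $\sigma=\mathrm{sgn}(\omega)=1$. A direct cofactor expansion of the displayed matrix, using $\mu=1$, $\omega=s_1+1$, and the identity $2\alpha^2=n=2\omega-1$ (with $\alpha=(n/2)^{1/2}$), yields the clean factorization
\[
d_1(\nu)=(\nu^2-2\gamma\nu+\omega)(\nu^4+2b\nu^2+c).
\]
The quadratic factor contributes the real roots $\bar{\nu}_\pm$ iff $\gamma^2\geq\omega$. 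The biquadratic, viewed as $u^2+2bu+c$ in $u=\nu^2$, has two positive roots — hence four real roots $\pm\nu_\pm$ in $\nu$ — iff $b<0$ and $b^2>c$, which using $c>0$ is exactly $\gamma^2>(\omega+\sqrt{c})/2$. Both cases of the proposition therefore include $\pm\nu_\pm$, and case (b) adds $\bar{\nu}_\pm$. The boundary values are $n_1(\pm\infty)=0$ (since $\nu^2 I$ dominates $m_1$ at infinity) and $n_1(0)=0$: the three leading principal minors of $m_1(0)$ are $\omega$, $\omega^2+1/2$, and $\omega^2(\omega-2)$, all positive since $c>0\iff\omega>2$, so Jacobi's signature theorem for Hermitian matrices applies.

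In case (a) the quadratic factor is everywhere positive, so $d_1$ alternates sign $+,-,+,-,+$ across the five intervals cut out by $\pm\nu_\pm$. Combined with $n_1(\pm\infty)=n_1(0)=0$ and the fact that $n_1$ changes by $\pm 1$ at each simple zero of $d_1$, this forces the Morse sequence $0,1,0,1,0$, and hence $\eta_1(\nu_-)=-1$ and $\eta_1(\nu_+)=+1$, as claimed.

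In case (b) all six roots are real. The identities $\bar{\nu}_-^2+\bar{\nu}_+^2=4\gamma^2-2\omega=\nu_-^2+\nu_+^2$ and $\bar{\nu}_-^2\bar{\nu}_+^2=\omega^2>c=\nu_-^2\nu_+^2$ force the nested ordering $0<\nu_-<\bar{\nu}_-<\bar{\nu}_+<\nu_+$ (taking $\gamma>0$; the other sign follows from the identity $m_1(\nu;-\gamma)=m_1(-\nu;\gamma)$). The alternating signs of $d_1$ on the five positive intervals give the Morse-index parity pattern even, odd, even, odd, even, with $n_1=0$ on both outer intervals. The only remaining ambiguity — and the main obstacle of the argument — is whether $n_1=0$ or $n_1=2$ on the central interval $(\bar{\nu}_-,\bar{\nu}_+)$. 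I settle this by evaluating at the symmetric point $\nu=\gamma$: the first and second leading principal minors of $m_1(\gamma)$ are $\omega-\gamma^2<0$ and $(\omega-\gamma^2)(\gamma^2+\omega+1)-\alpha^2<0$, while $d_1(\gamma)>0$ (both factors of $d_1$ being negative at $\gamma$, since $\nu_-<\gamma<\nu_+$ and $\omega<\gamma^2$). Two sign variations in the Jacobi sequence $1,-,-,+$ give $n_1(\gamma)=2$, so the full Morse pattern is $0,1,2,1,0$ and the four positive-$\nu$ jumps produce $\eta_1(\nu_-)=-1$, $\eta_1(\bar{\nu}_-)=-1$, $\eta_1(\bar{\nu}_+)=+1$, $\eta_1(\nu_+)=+1$, as claimed. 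A more conceptual alternative for this final Morse-index computation is a continuity argument through $\gamma^2=\omega$: there $\bar{\nu}_\pm$ coalesce at $\nu=\gamma$ and a single eigenvalue of $m_1(\gamma)$ touches zero without crossing, so for $\gamma^2$ slightly larger this eigenvalue must dip strictly negative on the emerging central interval, again contributing $+1$ to $n_1$ there.
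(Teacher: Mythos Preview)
Your proof is correct, but it proceeds by a genuinely different route from the paper's. The paper diagonalizes $m_1(\nu)$ explicitly, exhibiting the three eigenvalues
\[
\lambda_0=\nu^2-2\gamma\nu+\omega,\qquad \lambda_\pm=\nu^2+\omega\pm\sqrt{4\gamma^2\nu^2+2\omega},
\]
so that the Morse index is read off directly as the number of negative $\lambda$'s; in particular $(\bar\nu_-,\bar\nu_+)\subset(\nu_-,\nu_+)$ follows from the one-line check $\lambda_-(\bar\nu_\pm)<0$. You instead stay at the level of the determinant factorization $d_1=\lambda_0(\lambda_+\lambda_-)$ and recover the Morse index via Jacobi's leading-minor criterion at the test points $\nu=0$ and $\nu=\gamma$, together with a sum--product argument ($\bar\nu_-^2+\bar\nu_+^2=\nu_-^2+\nu_+^2$ while $\bar\nu_-^2\bar\nu_+^2>\nu_-^2\nu_+^2$) for the nesting. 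The paper's approach is shorter once the eigenvalues are known, since no parity or minor bookkeeping is needed; your approach trades the diagonalization step for purely algebraic minor computations, which has the virtue of not requiring one to guess or derive the eigenvectors. Your continuity argument through $\gamma^2=\omega$ is a nice heuristic but is not needed given that the Jacobi computation at $\nu=\gamma$ already settles case~(b).
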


\begin{proof}
For $\mu=1$ one finds that the eigenvalues of $m_{1}(\nu)$ are
\[
\lambda_{0}=\nu^{2}-2\gamma\nu+\omega\text{ and } \lambda_{\pm}=\nu^{2}%
+\omega\pm\sqrt{4\gamma^{2}\nu^{2}+2\omega}\text{.}%
\]
The eigenvalue $\lambda_{0}$ is zero only at $\bar{\nu}_{\pm}$ for $\gamma
^{2}>\omega$, and the eigenvalue $\lambda_{+}$ remains always positive.

The eigenvalue $\lambda_{-}$ is zero at the solutions of $\nu^{4}+2b\nu
^{2}+c=0$, that is at $\nu^{2}=-b\pm\sqrt{b^{2}-c}$. Since $c$ is positive for
$n\geq3$, then $\lambda_{-}$ is zero at $\pm\nu_{\pm}$ only if $b<0$ and
$b^{2}-c>0$. For $\gamma^{2}>(\omega+\sqrt{c})/2$, one has that $b<0$ and
$b^{2}-c=(2\gamma^{2}-\omega)^{2}-c>0$, then the eigenvalue $\lambda_{-}$ is
zero at $\pm\nu_{\pm}$ only for $\gamma^{2}>(\omega+\sqrt{c})/2$.

Since $\omega=s_{1}+1$, then $\sigma=sgn(\omega)=1$. For (a), the eigenvalues
$\lambda_{0}$ and $\lambda_{+}$ are positive, and $\lambda_{-}$ is negative
only when $\left\vert \nu\right\vert \in(\nu_{-},\nu_{+})$. Therefore, one has
that $\eta_{1}(\nu_{-})=-1$ and $\eta_{1}(\nu_{+})=1\emph{.}$

For (b), one has that $\lambda_{+}$ is positive, $\lambda_{0}$ is negative
when $\nu\in(\bar{\nu}_{-},\bar{\nu}_{+})$, and $\lambda_{-}$ is negative when
$\left\vert \nu\right\vert \in(\nu_{-},\nu_{+})$. Since
\[
\lambda_{-}(\bar{\nu}_{\pm})=2\gamma\bar{\nu}_{\pm}-\sqrt{4\gamma\bar{\nu
}_{\pm}^{2}+2\omega},
\]
then $\lambda_{-}(\bar{\nu}_{\pm})$ is negative and $(\bar{\nu}_{-},\bar{\nu
}_{+})\subset(\nu_{-},\nu_{+})$. Therefore, one has that $\eta_{1}(\nu
_{-})=0-1$, $\eta_{1}(\bar{\nu}_{-})=1-2$, $\eta_{1}(\bar{\nu}_{+})=2-1$ and
$\eta_{1}(\nu_{+})=1-0$\emph{.}
\end{proof}

\begin{theorem}
For $k\in\{1,n-1\}$ and $\mu=1$, the polygonal equilibrium has two global
bifurcation of periodic traveling waves with velocity $\gamma^{2}%
>(\omega+\sqrt{c})/2$, starting from the periods $2\pi/\nu_{+}$ and $2\pi
/\nu_{-}$, and with symmetries $\mathbb{\tilde{Z}}_{n}(k)$. In addition, there
are two global bifurcation from $2\pi/\bar{\nu}_{+}$ and $2\pi/\bar{\nu}_{-}$,
for each $\gamma^{2}>\omega$, with symmetries $\mathbb{\tilde{Z}}_{n}(1)$.
\end{theorem}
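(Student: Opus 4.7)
The plan is to deduce this theorem directly from the general bifurcation theorem of Section~5 applied to the spectral data of the blocks $m_{1}(\nu)$ and $m_{n-1}(\nu)$ with $\mu=1$, using the preceding proposition as the main spectral input. Recall that the Section~5 theorem converts the nonvanishing of $\eta_{k}(\nu_{k})$ into a global bifurcation branch of $2\pi/\nu_{k}$-periodic orbits carrying the isotropy $\mathbb{\tilde{Z}}_{n}(k)$.

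For $k=1$ the argument is essentially a transcription. Cases~(a) and~(b) of the preceding proposition together cover every $\gamma^{2}>(\omega+\sqrt{c})/2$ and yield $\eta_{1}(\nu_{\pm})=\pm 1\neq 0$; case~(b), valid for $\gamma^{2}>\omega$, furthermore gives $\eta_{1}(\bar{\nu}_{\pm})=\pm 1\neq 0$. Feeding each of these four nonzero jumps into the Section~5 theorem produces the corresponding global bifurcating branches of $2\pi/\nu_{\pm}$- and $2\pi/\bar{\nu}_{\pm}$-periodic traveling waves with symmetry $\mathbb{\tilde{Z}}_{n}(1)$.

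For $k=n-1$ one uses the relation $m_{n-1}(\nu)=\bar{m}_{1}(-\nu)$ established in the proposition of Section~4, which gives $n_{n-1}(\nu)=n_{1}(-\nu)$ and hence $\eta_{n-1}(\nu_{0})=-\eta_{1}(-\nu_{0})$. Reducing to $\gamma>0$ by the invariance $(\gamma,\nu)\mapsto(-\gamma,-\nu)$ of the equation $m_{k}(\nu)=\nu^{2}I-2\gamma\nu(iJ)+B_{k}$, the numbers $\bar{\nu}_{\pm}=\gamma\pm(\gamma^{2}-\omega)^{1/2}$ are both positive, so the reflections $-\bar{\nu}_{\pm}$ lie on the negative axis and yield no bifurcation at positive frequency; the only positive singular points of $m_{n-1}(\nu)$ are therefore $\nu_{-}$ and $\nu_{+}$. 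Since the relevant eigenvalue $\lambda_{-}$ of $m_{1}$ is even in $\nu$, the Morse jump at $-\nu_{\pm}$ has opposite sign to the jump at $+\nu_{\pm}$, and the extra minus sign in $\eta_{n-1}=-\eta_{1}(-\cdot)$ then yields $\eta_{n-1}(\nu_{\pm})=\pm 1\neq 0$. Two further applications of the Section~5 theorem complete the bifurcation picture.

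The main obstacle is the bookkeeping for $k=n-1$: one must reflect the spectrum of $m_{1}$ across $\nu=0$, verify that the asymmetric roots $\bar{\nu}_{\pm}$ do not reappear as positive singular points for $m_{n-1}$, and track the additional sign in $\eta_{n-1}(\nu_{0})=-\eta_{1}(-\nu_{0})$. Once this is settled, the rest is routine, and the global alternative (nonadmissibility or return to another equilibrium) is exactly the one already recorded after the general bifurcation theorem in Section~5.
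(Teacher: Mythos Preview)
Your proposal is correct and follows essentially the same route as the paper: the theorem is not given a separate proof there, since it is an immediate corollary of the preceding proposition (which computes $\eta_{1}(\nu_{\pm})$ and $\eta_{1}(\bar{\nu}_{\pm})$), the relation $n_{n-1}(\nu)=n_{1}(-\nu)$ stated at the top of the spectral section, and the general bifurcation theorem of Section~5. Your write-up simply spells out this implicit argument, including the bookkeeping that shows the asymmetric roots $\bar{\nu}_{\pm}$ contribute only for $k=1$ and not for $k=n-1$; one small imprecision is that you quote the formula $m_{k}(\nu)=\nu^{2}I-2\gamma\nu(iJ)+B_{k}$, valid for $k\in\{2,\dots,n-2,n\}$, when justifying the $(\gamma,\nu)\mapsto(-\gamma,-\nu)$ invariance, whereas for $k=1$ the relevant block is $m_{1}(\nu)=\nu^{2}\,diag(\mu^{2},I)-2\gamma\nu\,diag(\mu,iJ)+B_{1}$, but the same invariance holds there as well.
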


For $\mu=0$, the block $m_{1}(\nu)$ is given by%
\[
m_{1}(\nu)=\left(
\begin{array}
[c]{cc}%
\nu^{2}+s_{1} & 2\gamma\nu i\\
-2\gamma\nu i & \nu^{2}+s_{1}%
\end{array}
\right)  \text{.}%
\]
Setting $m_{1}(\nu)=\nu^{2}-2\gamma\nu iJ+2diag\left(  \omega-\bar{\omega}%
_{1},\bar{\omega}_{1}\right)  $, with $\omega=s_{1}$ and $\bar{\omega}%
_{1}=s_{1}/2$, one may analyze the block as we did for the blocks $m_{k}(\nu
)$. In this way one gets that $m_{1}(\nu)$ changes its Morse index at $\pm
\nu_{\pm}$ for $\gamma^{2}>s_{1}$, with $\eta_{k}(\nu_{\pm})=\pm1$, where%
\[
\nu_{\pm}^{2}=(2\gamma^{2}-s_{1})\pm\sqrt{4\gamma^{2}\left(  \gamma^{2}%
-s_{1}\right)  }\text{.}%
\]

Therefore, for $\mu=0$ and $k\in\{1,n-1\}$, the polygonal equilibrium has two
global bifurcations of traveling waves with velocity $\gamma^{2}>s_{1}$,
starting from the periods $2\pi/\nu_{+}$ and $2\pi/\nu_{-}$, and with
symmetries $\mathbb{\tilde{Z}}_{n}(k)$. These bifurcations are non-admissible
or go to another bifurcation point.

\subsection{The case $n=2$}

The irreducible representations for $n=2$ are different from those for larger
$n$, due to the action of $\mathbb{Z}_{2}$. This fact gives a change in the
representation for $k=1$, but the change of variables for $k=2$ remains the
same as for larger $n$'s. In particular the bifurcation results for the case
$k=n=2$ are similar to the ones already given.

For $k=1$, define the isomorphism $T_{1}:\mathbb{C}^{4}\rightarrow W_{1}$ as%
\begin{align*}
T_{1}(v,w)  &  =(v, 2^{-1/2}w, 2^{-1/2}w)\text{with}\\
W_{1}  &  =\{(v,w,w):v,w\in\mathbb{C}^{2}\}\text{.}%
\end{align*}

It is not difficult to compute the Hessian of the potential $V$. The matrix
$B_{2}$ is the same as before, but $B_{1}$ is now a $4\times4$ complex matrix.
One has that $s_{1}=1/2$.

\subsubsection{Vortices}

The matrix $m_{1}(\nu)$ is%

\[
\left(
\begin{array}
[c]{cccc}%
\mu\left(  \mu+5/2\right)  & -i\nu\mu & -\sqrt{2}\mu & 0\\
i\nu\mu & \mu\left(  \mu-3/2\right)  & 0 & \sqrt{2}\mu\\
-\sqrt{2}\mu & 0 & 2\mu+1/2 & -i\nu\\
0 & \sqrt{2}\mu & i\nu & 1/2
\end{array}
\right)  \text{.}%
\]%
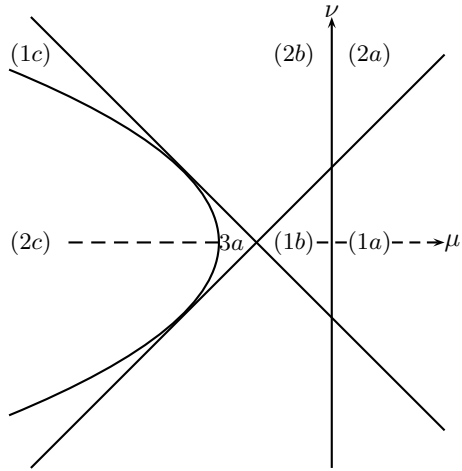
\begin{figure}[ht]
\centering\subfloat{\begin{pspicture}(-4.5,-3)(1.5,3)\SpecialCoor
\psline[linestyle=dashed]{->}(.8,0)(1.5,0)
\psline[linestyle=dashed](-3.5,0)(-1.5,0)
\psline[linestyle=dashed](-.2,0)(.2,0)
\rput[l](1.5,0){$\mu$}
\rput[b](0,3){$\nu$}
\psline{->}(0,-3)(0,3)
\psline(-4,-3)(1.5,2.5)
\psline(-4,3)(1.5,-2.5)
\rput{90}{\parabola(2.3,4.29)(0,1.5)}
\rput(.5,2.5){\small$(2a)$}
\rput(-.5,2.5){\small$(2b)$}
\rput(-4,0){\small$(2c)$}
\rput(-.5,0){\small$(1b)$}
\rput(-4,2.5){\small$(1c)$}
\rput(.5,0){\small$(1a)$}
\rput[l](-1.5,0){\small$3a$}
\NormalCoor\end{pspicture}}
\caption{Graph of $d_{1}(\mu,\nu)=0$.}
\end{figure}

\begin{proposition}
Let%
\[
\text{ }\nu_{0}(\mu)=\left\vert \mu+1/2\right\vert \text{ and }\nu_{1}%
(\mu)=\sqrt{3}(-\mu-5/4)^{1/2}\text{,}%
\]
then the determinant of $m_{1}(\nu)$ is zero only on the curves: $\mu=0$ for
$\nu\in\mathbb{R}$, $\pm\nu_{0}$ for $\mu\in\mathbb{R}$ and $\pm\nu_{1}$ for
$\mu\in(-\infty,-5/4)$. Furthermore, dividing the semiplane $\nu>0$ in the
seven regions separated by these curves, the Morse number of $m_{1}(\nu)$ in
the regions (1x) is $n_{1}=1$, in the regions (2x) it is $n_{1}=2$ and in the
regions (3x) it is $n_{1}=3$.
\end{proposition}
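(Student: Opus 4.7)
The plan is to factor $d_1(\mu,\nu) = \det m_1(\nu)$ explicitly in order to identify the zero set, and then determine the Morse index region by region by anchoring at two loci where it is easy to compute (the axis $\nu = 0$ and the large-$|\nu|$ asymptotics) and propagating by local constancy of the Morse index on the complement of $\{d_1 = 0\}$.

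First I would expand the determinant. Pulling $\mu$ out of the first two columns and then expanding, a direct calculation yields
\[ d_1(\mu,\nu) = \mu^2\bigl(\nu^2 - (\mu+\tfrac{1}{2})^2\bigr)\bigl(\nu^2 + 3\mu + \tfrac{15}{4}\bigr). \]
The three stated curves $\mu = 0$, $\nu = \pm\nu_0(\mu)$ and $\nu = \pm\nu_1(\mu)$ (the last real only for $\mu\leq -5/4$) are then read off.

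Next, on the $\mu$-axis the matrix decouples into the two $2\times 2$ Hermitian blocks indexed by $\{1,3\}$ and $\{2,4\}$, with determinants $2\mu(\mu+\tfrac{1}{2})(\mu+\tfrac{5}{4})$ and $-\tfrac{3}{2}\mu(\mu+\tfrac{1}{2})$ respectively. Their signs, combined with the signs of the traces, give $n_1 = 1$ for $\mu\in(-1/2,0)\cup(0,\infty)$, $n_1 = 3$ for $\mu\in(-5/4,-1/2)$, and $n_1 = 2$ for $\mu<-5/4$. For $|\nu|\to\infty$ the dominant part of $m_1(\nu)$ is the pencil of the $\pm i\nu$ entries, which diagonalizes to give $n_1(\pm\infty)=2$. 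On the complement of $\{d_1 = 0\}$ the Morse index is locally constant; walking along vertical lines $\mu = \mathrm{const}$ from $\nu = 0$ up to $\nu = +\infty$ and using that each simple crossing of a zero curve flips exactly one eigenvalue, together with the sign relation $\mathrm{sgn}(d_1) = (-1)^{n_1}$, pins down $n_1$ in six of the seven regions.

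The delicate step is the region enumeration: the parabola $\nu = \nu_1(\mu)$ is tangent to the left ray $\nu = -\mu - \tfrac{1}{2}$ at $(\mu,\nu) = (-2,\tfrac{3}{2})$, pinching what looks like a single component of $\{\nu_1(\mu) < \nu < -\mu - 1/2\}$ into two genuinely distinct regions. The piece with $-2<\mu<-5/4$ is the continuous extension of the stratum $\{-5/4<\mu<-1/2,\,0<\nu<-\mu-1/2\}$ and so carries $n_1 = 3$, while the piece with $\mu<-2$ carries $n_1 = 1$. The latter cannot be inferred from the anchors alone and must be verified by a direct eigenvalue count at a representative point, say $(\mu,\nu)=(-4,3)$: computing the coefficients of the characteristic polynomial from the principal minors and applying Descartes' rule gives three positive roots and one negative root, hence $n_1 = 1$. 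Once this is in hand, the remaining labels $(1c)$ and $(2c)$ follow by continuity, and this region-identification together with the tangency argument is the main obstacle.
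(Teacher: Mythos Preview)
Your proposal is correct and follows the same overall architecture as the paper: factor $d_1$, anchor the Morse index at $\nu=0$ and at $\nu\to\infty$, propagate by continuity across the connected components of $\{d_1\neq 0\}$, and treat the isolated region bounded by the tangency at $\mu=-2$ by a separate computation. Two tactical choices differ from the paper, and both are worth noting.

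At $\nu=0$ you exploit the block decoupling into the $\{1,3\}$ and $\{2,4\}$ submatrices and read off the Morse index from the signs of their $2\times 2$ determinants and traces. The paper instead writes down the four eigenvalues of $m_1(\mu,0)$ explicitly as closed-form radicals and determines their signs. Your route is shorter and avoids the radicals; the paper's route has the virtue of giving the eigenvalues themselves.

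For the region $(1c)$ you pick an interior test point $(\mu,\nu)=(-4,3)$ and appeal to Descartes' rule on the characteristic polynomial. The paper instead evaluates $m_1$ along the boundary line $\nu=\mu+1/2$ (equivalently $\nu=\nu_0$ by the $\nu\mapsto-\nu$ conjugation symmetry), where the eigenvalues factor explicitly as $0$, $3\mu$, and $\mu^2+1/2\pm\sqrt{(\mu^2+1/2)^2+\mu(\mu+2)(2\mu+1)}$; for $\mu<-2$ two of these are positive, forcing $n_1\le 2$ near the boundary, and then the sign of $d_1$ in $(1c)$ pins $n_1=1$. The boundary evaluation is a cleaner argument than a single numerical point because it works uniformly for all $\mu<-2$, whereas your Descartes computation needs the reader to trust one unexhibited characteristic polynomial; you might consider replacing it with the paper's boundary trick, which also dovetails with the tangency observation you already made.
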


\begin{proof}
Since the determinant $\det m_{1}(\nu)$ is
\[
\det m_{1}=\mu^{2}\left(  \nu^{2}-(\mu+1/2\right)  ^{2})\left(  \nu^{2}%
+3(\mu+5/4\right)  )\text{,}%
\]
then this determinant is zero on the above curves, which intersect only at
$\mu=-2$.

For $\nu$ large, the Morse number of $m_{1}(\nu)$ is the same as the one of
the matrix $-\nu diag(\mu iJ, iJ)$. Hence, $n_{1}=2$ in the regions (2a) and
(2b). Now, the matrix $m_{1}(\mu,0)$ has eigenvalues%

\begin{align*}
&  \frac{1}{4}(2\mu^{2}-3\mu+1)\pm\frac{1}{4}\sqrt{4\mu^{4}-12\mu^{3}%
+37\mu^{2}+6\mu+1}\text{ and}\\
&  \frac{1}{4}(2\mu^{2}+9\mu+1)\pm\frac{1}{4}\sqrt{4\mu^{4}+4\mu^{3}+29\mu
^{2}-2\mu+1}.
\end{align*}

Then, $n_{1}=2$ in (2c), $n_{1}=3$ in (3a) and $n_{1}=1$ in (1a) and (1b).

In order to find the Morse number in (1c), one sees that the matrix $m_{1}%
(\mu, \mu+1/2)$ has eigenvalues $0$, $3\mu$ and%

\[
\mu^{2}+1/2\pm\sqrt{(\mu^{2}+1/2)^{2}+\mu\left(  \mu+2\right)  \left(
2\mu+1\right)  }.
\]

Since $m_{1}(\mu, \mu+1/2)$ has two positive eigenvalues for $\mu<-2$, then
$n_{1}\leq2$ in (1c). But, since $det(m_{1})$ is negative in (1c), then
$n_{1}=1$ there.

Note that the polygonal equilibrium is stable for $\mu<-5/4$.
\end{proof}

Thus, there is a global bifurcation of periodic orbits, starting from the
period $2\pi/{\nu_{0}}$ and, for $\mu<-5/4$, starting form the period
$2\pi/{\nu_{1}}$, and symmetry $\mathbb{\tilde{Z}}_{1}(1)$, that is
$u_{0}(t+\pi)=-u_{0}(t)$ for the central element, and $u_{2}(t)=-u_{1}(t+\pi)$
for the two elements with circulation $1$.

\subsubsection{Filaments}

The matrix $m_{1}(\nu)$ is%

\[
\left(
\begin{array}
[c]{cccc}%
\mu\left(  \mu\nu^{2}+\mu+5/2\right)  & -2i\nu\gamma\mu & -\sqrt{2}\mu & 0\\
2i\nu\gamma\mu & \mu\left(  \mu\nu^{2}+\mu-3/2\right)  & 0 & \sqrt{2}\mu\\
-\sqrt{2}\mu & 0 & \nu^{2}+2\mu+1/2 & -2i\nu\gamma\\
0 & \sqrt{2}\mu & 2i\nu\gamma & \nu^{2}+1/2
\end{array}
\right)  .
\]

In this case the determinant is a polynomial of degree $8$, with no easy
factorization. In the simple case of $\nu=0$, the determinant is%

\[
\det m_{1}(0)=-3\mu^{2}\left(  \mu+1/2\right)  ^{2}\left(  \mu+5/4\right)
\text{,}%
\]

Then, the determinant $\det m_{1}(\mu,0)$ is negative for $\mu>-5/4$, but
$\det m_{1}(\mu,\nu)$ is positive for $\nu$ large. Thus, for $\mu>-5/4$, there
is a value $\nu_{1}$ where $\det m_{1}(\mu,\nu)$ changes sign. Hence,
$\eta_{1}(\nu_{1})\neq0$ for $\mu>-5/4$. That is, for $\mu>-5/4$ and any
$\gamma$, there is a global bifurcation of periodic traveling waves starting
at the period $2\pi/{\nu_{1}}$, wave velocity $\gamma$ and symmetry
$\mathbb{\tilde{Z}}_{1}(1)$, as before.


\end{document}